\pgfplotsset{compat=1.18}
\newenvironment{restated}[1]{%
  \par\noindent\textbf{Theorem #1.}\itshape\ }%
  {\par}
\newcommand{\drawarrow}[3]{%
  \node[circle,fill,inner sep=1.2pt] at (#1) {};
  \draw[very thick, >=stealth, ->, #3] (#1) -- ++(#2);
  \node[circle,fill,inner sep=1.2pt] at ($(#1) + (#2)$) {};
}
\newtheorem{theorem}{Theorem}[section]
\newtheorem{lemma}[theorem]{Lemma}
\newtheorem{proposition}[theorem]{Proposition}
\newtheorem{corollary}[theorem]{Corollary}
\newtheorem{definition}[theorem]{Definition}
\newtheorem{example}[theorem]{Example}
\newtheorem{notation}[theorem]{Notation}
\newcommand*{\dist}{d}
\newcommand*{\bbbchi}{\boldsymbol{\chi}}
\newcommand*{\bkappa}{\boldsymbol{\kappa}}
\newcommand*{\bbbxi}{\boldsymbol{\xi}}
\newcommand*{\diam}{\mathrm{diam}}
\newcommand*{\dimh}{\dim_\mathrm{H}}
\newcommand*{\dimdeg}{\dim_\mathrm{D}}
\newcommand*{\ximetric}{(\Xi_\iota,\hat\dist_{\Xi_\iota})}
\begin{document}

\title[Reducible Iterated Graph Systems]{Reducible Iterated Graph Systems: \\
multiscale-freeness and multifractals}

\author{Nero Ziyu Li$^{1,2,*}$, Frank Xin Hu$^3$, Thomas Britz$^3$}
\address{$^1$ 
Department of Mathematics,
Imperial College London, United Kingdom}
\address{$^2$ CDT Random Systems, University of Oxford, United Kingdom}
\address{$^3$ School of Mathematics and Statistics, UNSW Sydney, Australia}
\address{$^*$ Author to whom any correspondence should be addressed.}

\ead{z5222549@zmail.unsw.edu.au, frankhumath@gmail.com and britz@unsw.edu.au}

\vspace{0.2cm}\hspace{1.7cm}\small{Mathematics subject classification: 28A20, 05C82}

\begin{abstract}
Iterated Graph Systems (IGS) transplant ideas from fractal geometry into graph theory. 
Building on this framework, we extend Edge IGS from the primitive to the reducible setting. 
Within this broader context, we formulate rigorous definitions of 
multifractality and multiscale-freeness for { fractal graphs}, 
and we establish conditions that are equivalent to the occurrence of these two phenomena. 
We further determine the corresponding fractal and degree spectra, 
proving that both are finite and discrete. 
These results complete the foundational theory of Edge IGS 
by filling the gap left by the primitive case studied in~\cite{li2024scale,neroli2024fractal}.
\end{abstract}

\section{Introduction}
\label{sec:Introduction}

We aim to investigate the fundamental properties of graphs that exhibit fractal characteristics, 
and to this end we introduce the Iterated Graph Systems (IGS) model. 
More generally, one may view an IGS as a form of discrete fractal structure,
an object whose antecedents have in fact been present in the literature for a long time.

\subsection{Background}

The terms ``fractal lattice" or ``hierarchical lattice" were introduced by physicists around 1970 to describe self-similar networks 
on which renormalisation techniques could be applied~\cite{migdal1996recursion,kadanoff1976notes}.
These lattices were soon used to study a wide range of statistical physics models, 
including the Ising and Potts models~\cite{stanley1977cluster,berker1979renormalisation,kaufman1981exactly}, 
percolation and cluster formation~\cite{alexander1982density,gefen1984phase}, 
random walks and anomalous diffusion~\cite{toussaint1983particle,stanley1984flow}, 
as well as diffusion-limited reactions and dynamical processes~\cite{toussaint1983particle,derrida1983fractal}.
Although fractal lattices have been extensively explored from a physical perspective, 
much of that progress is empirical or heuristic and thus falls short of a general and rigorous framework.

The notion of fractal networks, used to describe large empirical graphs in complex-network science, emerged in the early 2000s.~\cite{song2005self}.
Alongside the well-known small-world property of short paths with high clustering~\cite{watts1998collective} 
and the scale-free power-law degree distribution~\cite{barabasi1999emergence}, 
many natural networks were later shown, via box-covering analysis, to be fractal and self-similar across scales~\cite{song2005self}.
Further studies of hierarchical modularity and epidemic spreading confirmed that 
small-world, scale-free and fractal structures now form the canonical triad of 
complex-network properties~\cite{ravasz2003hierarchical,pastor2001epidemic}.

The expression ``fractal graph" is less common in mathematics, 
yet one can trace it back to the 1980s 
when Barlow and Bass rigorously constructed Brownian motion on the Sierpiński gasket and carpet, 
establishing sub-Gaussian heat kernel bounds and foundational results in analysis 
on fractal graphs~\cite{barlow1988brownian,barlow1997transition,barlow1999brownian}.
Around the same time, 
Kigami introduced a framework for defining Laplacian via graph approximations 
and developed the theory of 
post-critically finite self-similar sets~\cite{kigami1989harmonic,kigami1993harmonic,kigami2001analysis}.
Ben Hambly investigated the spectral asymptotics of such graphs, 
relating eigenvalue counting functions to local geometry 
and developing the notion of spectral dimension~\cite{hambly2010diffusion,hambly2011asymptotics}.
These ideas were later extended to more general metric–measure spaces and random walks, 
producing a robust framework for heat-kernel estimates and 
functional inequalities~\cite{grigor2008dichotomy,grigor2003heat,li2021self,devyver2023gradient}.

\medskip

While much is known about analysis on specific fractal graphs, a general and flexible framework for constructing and studying such structures remains largely undeveloped.  
Edge IGS attempts to fill that gap by producing generic fractal graphs via a general iterative procedure.
The underlying idea is hardly new, and its traces can be found in many physical models and mathematical constructions.  
A first prototype of the edge-substitution approach already appears in Xi et al. \cite{xi2017fractality,ye2019average}, yet the notion was not formally defined and named until \cite{neroli2024fractal}.  
Since then, a series of papers has continued to develop the framework and its applications \cite{li2018scale,anttila2024constructions,anttila2024iterated,li2025fractal}.

A systematic study of IGS provides a natural vantage point 
from which to examine the geometry and analysis of fractal graphs.  
Moreover, any results obtained for IGS feed directly back into 
statistical physics, complex networks, and the broader theory of fractal graphs.  
For all of these reasons, we believe that developing the theory of IGS is necessary.

\subsection{Reducibility}

{ 

A non-negative matrix $\mathbf X$ is irreducible if for every $i,j$ there exists $n$ such that $[X^{n}]_{ij} > 0$ and is primitive if $\mathbf X^{n}$ is strictly positive for some $n$. 
We call $\mathbf X$ reducible if it is not irreducible. 
Throughout the paper we apply these notions to the mass matrix $\mathbf M$, the distance matrix $\mathbf D$, and the degree matrix $\mathbf N$ in EIGS. 
For instance, we say the EIGS is mass-primitive if and only if $\mathbf M$ is primitive.

In the primitive cases studied in \cite{li2024scale,neroli2024fractal}, all relevant growth rates are governed by a single Perron eigenvalue, and the resulting dimensions have a single scaling exponent. 
By contrast, extending those results from the primitive case to the reducible case remains completely unexplored.
In the reducible case, several primitive blocks may be reachable from the initial colour. 
Blocks with the same maximal spectral radius can also occur along a reachable chain, producing polynomial corrections and multiple scaling regimes, which is technically subtle.

In this paper, we focus on three notions of reducibility: mass-reducibility, distance-reducibility, and degree-reducibility.
These correspond, respectively, to reachability relations governing mass, distance, and degree.
In \cite{li2024scale,neroli2024fractal}, all three were assumed to be primitive, which leads to concise formulae for the relevant fractal dimensions.
However, many classical fractals are themselves reducible in mass, distance, or degree.
The next two examples show that such
an extension is necessary and meaningful.
}

\begin{figure}[ht]
\centering

\begin{tikzpicture}
\begin{scope}[shift={(0, 0)}, scale=0.6]
    \node[draw=none, fill=none, rectangle] at (0,0.26) {\bf\large{Rule\,:}};
    
    \drawarrow{2,1.5}{2,0}{red}
    
    \draw[very thick,>=stealth,->] (5, 1.5) --++ (1, 0);
    
    \drawarrow{7,1.5}{1,0}{blue}
    \drawarrow{8,1.5}{1,0}{red}
    \drawarrow{8,1.5}{0,1}{red}

    \node[draw=none, fill=none, rectangle] at (7,1) {$\beta^+_1$};
    \node[draw=none, fill=none, rectangle] at (9,1) {$\beta^-_1$};
    
    \node[draw=none, fill=none, rectangle] at (11,1.5) {$R_1$};

    \drawarrow{2,-1}{2,0}{blue}
    \draw[very thick,>=stealth,->] (5,-1) --++ (1,0);
    \drawarrow{7,-1}{2,0}{blue}

    \node[draw=none, fill=none, rectangle] at (7,-1.5) {$\beta^+_1$};
    \node[draw=none, fill=none, rectangle] at (9,-1.5) {$\beta^-_1$};
    \node[draw=none, fill=none, rectangle] at (11,-1) {$R_2$};
    
    \draw[ultra thick, dashed] (-2,-2.5) to (15.8,-2.5);

\end{scope}
\begin{scope}[shift={(0, -3.5)}, scale=0.4]
    \drawarrow{0, 0}{1,1}{red}
    \drawarrow{0, 0}{1,-1}{red}

    \node[draw=none, fill=none, rectangle] at (0.5,-3) {$\Xi^0$};
    
    \draw[very thick,>=stealth,->] (3,0) --++ (1,0);

    \drawarrow{5, 0}{2, 2}{blue}
    \drawarrow{5, 0}{2, -2}{blue}
    \drawarrow{7, 2}{2, 1}{red}
    \drawarrow{7, 2}{2, -1}{red}
    \drawarrow{7, -2}{2, 1}{red}
    \drawarrow{7, -2}{2, -1}{red}

    \node[draw=none, fill=none, rectangle] at (7,-3.5) {$\Xi^1$};

    \draw[very thick,>=stealth,->] (10,0) --++ (1,0);

    \drawarrow{12, 0}{2, 2}{blue}
    \drawarrow{12, 0}{2, -2}{blue}
    \drawarrow{14, 2}{2, 1}{blue}
    \drawarrow{14, 2}{2, -1}{blue}
    \drawarrow{14, -2}{2, 1}{blue}
    \drawarrow{14, -2}{2, -1}{blue}
    \drawarrow{16, 3}{2, 0.5}{red}
    \drawarrow{16, 3}{2, -0.5}{red}
    \drawarrow{16, 1}{2, 0.5}{red}
    \drawarrow{16, 1}{2, -0.5}{red}
    \drawarrow{16, -1}{2, 0.5}{red}
    \drawarrow{16, -1}{2, -0.5}{red}
    \drawarrow{16, -3}{2, 0.5}{red}
    \drawarrow{16, -3}{2, -0.5}{red}

    \node[draw=none, fill=none, rectangle] at (16,-4) {$\Xi^2$};

    \draw[very thick,>=stealth,->] (19,0) --++ (1,0);

    \node[draw=none, fill=none, rectangle] at (22,0) {\LARGE{...}};

\end{scope}
\end{tikzpicture} 
    \caption{An example of a reducible EIGS: the binary tree}
    \label{fig:binary_tree}
\end{figure}
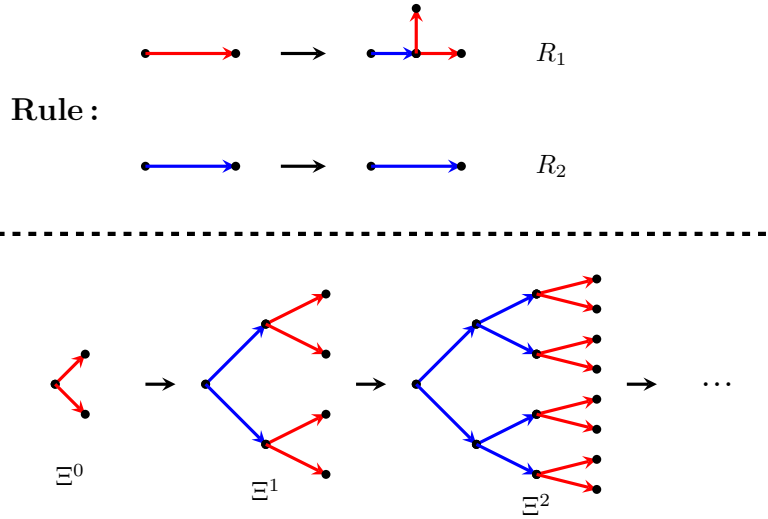

\begin{example}
Edge Iterated Graph Systems work in the following way. 
Starting from an initial graph, one proceeds step by step: 
at each step, every edge is replaced, independently and according to its colour, 
by the rule graph assigned to that colour. 

Figure~\ref{fig:binary_tree} displays a motivating example of a classical fractal { graph}, the binary tree, generated by a { mass-reducible, distance-reducible and degree-reducible EIGS}. 
The initial graph $\Xi^0$ consists of two red edges. 
At every subsequent step each red edge is replaced independently by the rule graph $R_1$, 
while each blue edge is replaced independently by the rule graph $R_2$. 
The binary tree is a well-known example whose Minkowski dimension is infinite; 
precise definitions and a full discussion will be given later.
\end{example}

\begin{figure}[ht]
\centering
    \newcommand{\reddiamond}[3]{%
  \begin{scope}[
    shift={(#1)},
    scale=#2,
    rotate=#3
  ]
    \drawarrow{0,0}{2,0.5}{red}
    \drawarrow{0,0}{2,-0.5}{red}
    \drawarrow{2,0.5}{2,-0.5}{blue}
    \drawarrow{2,-0.5}{2,0.5}{green}
  \end{scope}
}

\newcommand{\bluediamond}[3]{%
  \begin{scope}[
    shift={(#1)},
    scale=#2,
    rotate=#3
  ]
    \drawarrow{0,0}{2,0.5}{blue}
    \drawarrow{0,0}{2,-0.5}{blue}
    \drawarrow{2,0.5}{2,-0.5}{blue}
    \drawarrow{2,-0.5}{2,0.5}{blue}
  \end{scope}
}

\newcommand{\greendiamond}[3]{%
  \begin{scope}[
    shift={(#1)},
    scale=#2,
    rotate=#3
  ]
    \drawarrow{0,0}{2,0.5}{green}
    \drawarrow{0,0}{2,-0.5}{green}
    \drawarrow{2,0.5}{2,-0.5}{green}
    \drawarrow{2,-0.5}{2,0.5}{green}
    \drawarrow{2,-0.5}{0,1}{green}
  \end{scope}
}

\resizebox{\textwidth}{!}{%

\begin{tikzpicture}
\begin{scope}[shift={(0, 0)}, scale=0.6]
    \node[draw=none, fill=none, rectangle] at (0,-.5) {\bf\large{Rule\,:}};
    
    \drawarrow{2,1}{2,0}{red}
    
    \draw[very thick,>=stealth,->] (5, 1) --++ (1, 0);
    
    \drawarrow{7,1}{2,1}{red}
    \drawarrow{7,1}{2,-1}{red}
    \drawarrow{9,2}{2,-1}{blue}
    \drawarrow{9,0}{2,1}{green}

    \node[draw=none, fill=none, rectangle] at (11,0.5) {$\beta_1^-$};
    \node[draw=none, fill=none, rectangle] at (7,0.5) {$\beta_1^+$};
    \node[draw=none, fill=none, rectangle] at (12,1) {$R_1$};

    \drawarrow{2,-2}{2,0}{blue}
    
    \draw[very thick,>=stealth,->] (5, -2) --++ (1, 0);

    \drawarrow{7,-2}{2,1}{blue}
    \drawarrow{7,-2}{2,-1}{blue}
    \drawarrow{9,-1}{2,-1}{blue}
    \drawarrow{9,-3}{2,1}{blue}

    \node[draw=none, fill=none, rectangle] at (11,-2.5) {$\beta_2^-$};
    \node[draw=none, fill=none, rectangle] at (7,-2.5) {$\beta_2^+$};
    \node[draw=none, fill=none, rectangle] at (12,-2) {$R_2$};

    \drawarrow{14,-1}{2,0}{green}
    
    \draw[very thick,>=stealth,->] (17, -1) --++ (1, 0);

    \drawarrow{19,-1}{2,1}{green}
    \drawarrow{19,-1}{2,-1}{green}
    \drawarrow{21,0}{2,-1}{green}
    \drawarrow{21,-2}{2,1}{green}
    \drawarrow{21,-2}{0,2}{green}

    \node[draw=none, fill=none, rectangle] at (23,-1.5) {$\beta_3^-$};
    \node[draw=none, fill=none, rectangle] at (19,-1.5) {$\beta_3^+$};
    \node[draw=none, fill=none, rectangle] at (24,-1) {$R_3$};

    \draw[ultra thick, dashed] (-2,-3.5) to (13,-3.5);

\end{scope}
\begin{scope}[shift={(-2, -3)}, scale=0.5]
    \drawarrow{2,0}{2,0}{red}

    \node[draw=none, fill=none, rectangle] at (3,-2) {$\Xi_1^0$};
    
    \draw[very thick,>=stealth,->] (5, 0) --++ (1, 0);
    
    \reddiamond{7,0}{1}{0}

    \node[draw=none, fill=none, rectangle] at (9,-2) {$\Xi_1^1$};

    \draw[very thick,>=stealth,->] (12, 0) --++ (1, 0);

    \reddiamond{14,0}{0.5}{30}
    \reddiamond{14,0}{0.5}{-30}
    \bluediamond{15.75,1}{0.5}{-30}
    \greendiamond{15.75,-1}{0.5}{30}

    \node[draw=none, fill=none, rectangle] at (15.8,-2) {$\Xi_1^2$};

    \draw[very thick,>=stealth,->] (18, 0) --++ (1, 0);

    \reddiamond{20,0}{0.5}{70}
    \reddiamond{20,0}{0.5}{20}
    \bluediamond{20.7,1.85}{0.5}{20}
    \greendiamond{21.9,0.65}{0.5}{68}
    
    \reddiamond{20,0}{0.5}{-70}
    \reddiamond{20,0}{0.5}{-20}
    \greendiamond{20.7,-1.85}{0.5}{-20}
    \bluediamond{21.9,-0.65}{0.5}{-70}
    
    \bluediamond{22.6,2.55}{0.5}{-20}
    \bluediamond{22.6,2.55}{0.5}{-70}
    \bluediamond{24.5, 1.85}{0.5}{-70}
    \bluediamond{23.3, 0.65}{0.5}{-17}
    
    \greendiamond{22.6,-2.55}{0.5}{20}
    \greendiamond{22.6,-2.55}{0.5}{70}
    \greendiamond{24.5,-1.85}{0.5}{70}
    \greendiamond{23.3,-0.65}{0.5}{20}

    \greendiamond{24.5,-1.85}{0.42}{135}

    \node[draw=none, fill=none, rectangle] at (26,-2) {$\Xi_1^3$};
    
    \draw[very thick,>=stealth,->] (26, 0) --++ (1, 0);

    \node[draw=none, fill=none, rectangle] at (28,0) {\LARGE{...}};
\end{scope}
\end{tikzpicture} 
}
    \caption{A reducible EIGS: the \emph{Splendor} diamond hierarchical lattice}
    \label{fig:splendor}
\end{figure}
\begin{example}

Figure~\ref{fig:splendor} illustrates the construction of a variation of 
the so-called diamond hierarchical lattice (DHL).
{ Note that any single-coloured IGS is automatically mass-primitive and distance-primitive because the matrices are $1 \times 1$.
However, degree-primitivity is not automatic even when $K=1$, since the degree matrix $\mathbf N$ has size $2\times 2$ and encodes the in/out degrees of the two planting vertices. 
Hence whether the classical DHL is degree-primitive depends on the direction of edges. }

If we have only $R_1$ with only one colour, 
then it turns out to be the classical DHL, 
which is a much-studied example 
whose limit graph has integer Minkowski dimension $2$ and is scale-free with degree dimension $2$. 
See, for instance, 
\cite{hambly2010diffusion,levitan2022renormalization,cruz2023percolation} for detailed analyses.

Our picture, however, uses the reducible rule set $R_1,R_2,R_3$ 
and therefore produces what one may call a {\em splendor diamond hierarchical lattice}. 
The splendor DHL is an instructive example: 
it satisfies both multifractality and multiscale-freeness, 
and, remarkably, its fractal spectrum coincides with its degree spectrum.
\end{example}

One can readily imagine that estimating the growth behaviour of degrees is already a delicate task; 
assessing distances is even more complicated, particularly for an arbitrary reducible IGS.
Many natural questions therefore arise: 
\begin{itemize}
    \item How large is the limiting diameter?
    \item What is the fractal dimension?
    \item Does the scale-free property survive in any case? 
\end{itemize}
Once these questions have been answered, one can hope to investigate further models on such graphs.

The main theorems of this paper, reproduced in the subsection below, answer these questions 
in the wider setting of Reducible Iterated Graph Systems.

\subsection{Main results}

For the first two theorems, we work under the standing assumptions from Section~3.
That is, for every colour $j\in\mathfrak r_{\mathbf M}(\iota)$, the row-selection family $\mathcal D$ is locally primitive stable at $j$, and the matrix $\mathbf M_{\mathrm{dist}}(j)$ is primitive-Frobenius.

The first theorem gives the global Hausdorff dimension of the limit metric space.
Its meaning is that, once we restrict to the distance layer that survives under the metric normalisation, the Hausdorff dimension is obtained by dividing the maximal surviving mass growth rate by the distance growth rate.

\begin{restated}{\ref{thm:singlefractal}}
Let $\mathscr I=(\Xi_\iota^0,\mathcal R,\mathcal S)$ be a reducible edge iterated graph system, and let $(\Xi_\iota,\hat\dist_{\Xi_\iota})$ be its Gromov--Hausdorff scaling limit.
Then
\[
\dimh(\Xi_\iota)
=
\frac{
\displaystyle
\log \max_{k\in\mathfrak R_{\mathbf M_{\mathrm{dist}}(\iota)}(\iota)}
\rho\big(\mathbf B_k(\mathbf M_{\mathrm{dist}}(\iota))\big)
}{
\displaystyle
\log \min_{\mathbf D\in\mathcal D}
\max_{\ell\in\mathfrak R_{\mathbf D}(\iota)}
\rho\big(\mathbf B_\ell(\mathbf D)\big)
}.
\]
\end{restated}

The second theorem describes the full coarse fractal spectrum of the limit metric space.
It says that every Hausdorff dimension that appears on a ball comes from one of the colours in the distance layer of the initial colour, and no other value can occur.

\begin{restated}{\ref{thm:multifractal}}
Let $\mathscr I=(\Xi_\iota^0,\mathcal R,\mathcal S)$ be a reducible edge iterated graph system, and let $(\Xi_\iota,\hat\dist_{\Xi_\iota})$ be its Gromov--Hausdorff scaling limit.
Then
\[
\mathscr S(\Xi_\iota,\hat\dist_{\Xi_\iota})
=
\left\{
\frac{
\displaystyle
\log \max_{k\in\mathfrak R_{\mathbf M_{\mathrm{dist}}(j)}(j)}
\rho\big(\mathbf B_k(\mathbf M_{\mathrm{dist}}(j))\big)
}{
\displaystyle
\log \min_{\mathbf D\in\mathcal D}
\max_{\ell\in\mathfrak R_{\mathbf D}(j)}
\rho\big(\mathbf B_\ell(\mathbf D)\big)
}
:
j\in I_{\mathrm{dist}}(\iota)
\right\}.
\]
Consequently, $(\Xi_\iota,\hat\dist_{\Xi_\iota})$ satisfies the balanced-distance and divergent-mass condition if and only if it is a multifractal with a finite discrete spectrum, that is,
\[
1<
\big|
\mathscr S(\Xi_\iota,\hat\dist_{\Xi_\iota})
\big|
<\infty.
\]
\end{restated}

For the degree side, the key point is different.
The Hausdorff dimension in the metric setting above is a global quantity and therefore only sees the largest surviving growth rate, whereas degree dimension is defined from the exact degree distribution and must therefore distinguish different degree classes.

The third theorem gives the criterion for ordinary scale-freeness.
It says that the degree dimension exists precisely when all surviving degree classes have the same asymptotic mass growth rate, and that common growth rate then determines the degree dimension.

\begin{restated}{\ref{thm:singlescalefree}}
Let $\mathscr I=(\Xi_\iota^0,\mathcal R,\mathcal S)$ be a reducible edge iterated graph system, and let $(\Xi_\iota,\hat{\deg}_{\Xi_\iota})$ be its degree scaling limit.
If
\[
\max_{\mathbf u\in\mathcal U_\iota}
\max_{a:[\mathbf u]_a>0}
\max_{\ell\in\mathfrak R_{\mathbf N}(a)}
\rho\big(\mathbf B_\ell(\mathbf N)\big)
>1,
\]
then $\Xi_\iota$ is scale-free if and only if $\Xi_\iota$ does not satisfy the balanced-degree and divergent-mass condition and
\[
\max_{K\in\mathscr K_{\deg}(\iota)}
\max_{\mathbf u\in K}
\Lambda_{\mathbf M}^{\deg}(\mathbf u)
>1.
\]
In that case,
\[
\dimdeg(\Xi_\iota,\hat{\deg}_{\Xi_\iota})
=
\frac{
\displaystyle
\log \max_{K\in\mathscr K_{\deg}(\iota)}
\max_{\mathbf u\in K}
\Lambda_{\mathbf M}^{\deg}(\mathbf u)
}{
\displaystyle
\log \max_{\mathbf u\in\mathcal U_\iota}
\max_{a:[\mathbf u]_a>0}
\max_{\ell\in\mathfrak R_{\mathbf N}(a)}
\rho\big(\mathbf B_\ell(\mathbf N)\big)
}.
\]
\end{restated}

The fourth theorem records the full degree spectrum.
It says that, even when ordinary scale-freeness fails, every asymptotic branch of the degree distribution is still governed by one of the surviving degree classes, and the whole spectrum is exactly the collection of the corresponding exponents.

\begin{restated}{\ref{thm:multiscalefree}}
Let $\mathscr I=(\Xi_\iota^{0},\mathcal R,\mathcal S)$ be a reducible edge iterated graph system, and let $(\Xi_\iota,\hat{\deg}_{\Xi_\iota})$ be its degree scaling limit.
If
\[
\max_{\mathbf u\in\mathcal U_\iota}
\max_{a:[\mathbf u]_a>0}
\max_{\ell\in\mathfrak R_{\mathbf N}(a)}
\rho\big(\mathbf B_\ell(\mathbf N)\big)
>1,
\]
then
\[
\mathscr D(\Xi_\iota,\hat{\deg}_{\Xi_\iota})
=
\left\{
\frac{
\displaystyle
\log \max_{\mathbf u\in K}
\Lambda_{\mathbf M}^{\deg}(\mathbf u)
}{
\displaystyle
\log \max_{\mathbf u\in\mathcal U_\iota}
\max_{a:[\mathbf u]_a>0}
\max_{\ell\in\mathfrak R_{\mathbf N}(a)}
\rho\big(\mathbf B_\ell(\mathbf N)\big)
}
:
K\in\mathscr K_{\deg}(\iota)
\right\}.
\]
Consequently, $(\Xi_\iota,\hat{\deg}_{\Xi_\iota})$ satisfies the balanced-degree and divergent-mass condition if and only if it is finite discrete multiscale-free.
\end{restated}

\bigskip

Although a reducible IGS is a natural extension of an irreducible one, 
the step from irreducible to reducible turns out to be indispensable and far from trivial.

In forthcoming work, we shall introduce a broader framework called General Iterated Graph Systems, 
which in particular generates many classical objects, such as the Sierpiński gasket graph, 
in an entirely transparent way. 
Most of those classical fractals rely crucially on {\em reducible} substitutions; without a firm understanding of reducible IGS behaviour, we cannot hope to build a general theory of fractal graphs, such as fractal lattices or fractal networks. 
Hence, although parts of this paper appear technical, the analysis carried out here is unavoidable.

\section{Reducible Iterated Graph Systems}
In this section, we use the Lustig--Uyanik theorem to study the role of reducibility in EIGS and to understand to what extent the conclusions from the primitive case remain valid.
Although the reducible setting is more technically involved, the growth still takes the form of a polynomial correction multiplied by an exponential term.
This allows us to analyse the behaviour of the fractal spectrum directly from the underlying graph structure.

\subsection{Construction of reducible IGS}

In this subsection, we introduce the definitions of reducible IGS. 
Since some of these definitions can be found in \cite{li2024scale,neroli2024fractal}, 
we aim to keep them as concise as possible without sacrificing readability.

\begin{notation}
For each positive integer $N\in\mathbb{N}_+$, 
define $[N] := \{1,2,\ldots,N\}$.
In this paper, the variables $i$, $j$, $k$, $m$, and $n$ represent integers, 
with $n$ specifically denoting the number of iterations.

Let $\{\bbbxi_i\}_{i=1}^m $ be the standard basis for $\mathbb{R}^{1\times m}$, 
where each basis vector $\bbbxi_i$ has~$1$ in its $i$-th entry and $0$ in all other entries.
For any vector $\mathbf{v}\in\mathbb{N}^{1\times m}$, 
denote each $i$-th entry of $\mathbf{v}$ as $[\mathbf{v}]_i := \mathbf{v} \bbbxi_i^\top$.

For any matrices $\mathbf{A},\mathbf{B}\in\mathbb{R}^{m\times m}$, 
let $[\mathbf{A}]_{ij}:=\bbbxi_i \mathbf{A} \bbbxi_j^\top$ be the entry in row $i$ and column~$j$ of $\mathbf{A}$.
Let $\mathbf{v} \leq \mathbf{w}$ denote that $[\mathbf{v}]_i \leq [\mathbf{w}]_i$  for all $i \in [m]$ and
let $\mathbf{A} \leq \mathbf{B}$ denote that $[\mathbf{A}]_{ij}\leq [\mathbf{B}]_{ij}$ for all $i,j\in[m]$.
\end{notation}

\begin{definition}
An {\em Edge Iterated Graph System} (EIGS) $\mathscr{I}=(\Xi_{\iota}^0,\mathcal{R},\mathcal{S})$ is a triple where, 
for some positive integer of colours $K$,
\begin{itemize}
    \item[$\Xi_{\iota}^0$] is a given $\iota$-coloured edge where $\iota \in [K]$. 
          When the subscript $\iota$ is omitted, $\Xi^{0}$ may denote an arbitrary directed finite graph.
          In particular, we denote the two vertices in $\Xi_\iota^0$ by $\mathfrak{v}^+$ and $\mathfrak{v}^-$.
          In this paper, $\iota$ always represents the initial colour;
    \item[$\mathcal{R}$] $=\{R_i\}_{i \in [K]}$ is a family of directed connected graphs, 
          called {\em rule} or {\em seed graphs}. 
          Each rule graph must contain two {\em planting vertices} $\{\beta_i^+, \beta_i^-\}\subset V(R_i)$.
\end{itemize}
The colour function $\mathscr{C}(e)$ indicates the colour of edge $e$.
The substitution operator is defined as 
\begin{itemize}
    \item[$\displaystyle\mathcal{S}$] $: e=(a,b) \mapsto R_{\mathscr{C}(e)}$ 
          fixing $a = \beta_{\mathscr{C}(e)}^+,\; b = \beta_{\mathscr{C}(e)}^-$.
\end{itemize}
That is, we substitute the arc $(a,b)$ by $R_{\mathscr{C}(e)}$ such that 
$a$ and $b$ coincide with $\beta_{\mathscr{C}(e)}^+$ and $\beta_{\mathscr{C}(e)}^-$.
In this paper, we require $\dist_{R_i}(\beta_i^+,\beta_i^-) \geq 2$.
Whenever a distance is considered in a directed graph, it is understood to be the graph distance in its underlying undirected graph.

We now define a discrete-time dynamical system indexed by $n\in\mathbb{N}$
\[
\Xi_{\iota}^{n+1}:=\mathcal{S}(\Xi_{\iota}^n):=\bigcup_{e \in E(\Xi_{\iota}^n)} \mathcal{S}(e)  ,
\]
satisfying $\mathcal{S}^0(\Xi_{\iota}^0) = \Xi_{\iota}^0$ 
and $\mathcal{S}^{n+m}(\Xi_{\iota}^0) = \mathcal{S}^n\big(\mathcal{S}^m(\Xi_{\iota}^0)\big)$, 
for all $n,m\in\mathbb{N}$.
Let $\Xi_{\iota}$ be the Gromov-Hausdorff scaling limit defined in~\cite{neroli2024fractal}
of $(\Xi_{\iota}^n)_{n\in\mathbb{N}}$ (see Theorem~\ref{thm:GH-limit} below). 

We here record the notions of reducibility that are already available at this stage, 
where the definitions of $\mathbf{M}$ and $\mathbf{N}$ will be given in the subsequent subsection.
An iterated graph system is said to be {\em mass-reducible} if $\mathbf{M}$ is reducible; 
and it is said to be {\em degree-reducible} if $\mathbf{N}$ is reducible.
The notion of distance-reducibility will be introduced later, after the distinguished matrix $\mathbf D_\iota^*$ has been defined.
\end{definition}

\subsection{Relevant definitions}
In this subsection, we will provide the definitions of $\mathbf{M}$, $\mathbf{N}$, and $\mathcal{D}$.

\begin{notation}
For a directed graph $G$, we define the vector $\bbbchi(G)\in\mathbb{N}^{1 \times K}$ by
\[
  [\bbbchi (G)]_i := \big| \{ e \in E(G) : \mathscr{C}(e)=i \} \big| ,
\]
where $\bbbchi(G)$ counts the number of edges of each colour in the entire graph $G$. 
Specifically, the $i$-th component $[\bbbchi(G)]_i$ represents 
the total number of edges in $G$ that have colour~$i$.

For each vertex $v \in V(G)$, 
the vector $\bkappa_G(v)\in\mathbb{N}^{1 \times 2K}$ is defined as a vector capturing the local edge distribution around $v$:
  \begin{align*}
      [\bkappa_G(v)]_{2i-1} 
  &:= \big|\{ e\text{ is an outgoing edge of $v$} : \mathscr{C}(e)=i \} \big| \quad& \text{for } i \in [K] ;\\
      [\bkappa_G(v)]_{2i}
  &:= \big|\{ e\text{ is an incoming edge of $v$} : \mathscr{C}(e)=i \} \big| \quad& \text{for } i \in [K] .
  \end{align*}

Here, $\bkappa_G(v)$ encodes both the out-degree and in-degree of vertex $v$ for each colour~$i$. 
When the ambient graph is clear, we simply write $\bkappa(v)$.
\end{notation}

\begin{definition}
The matrices $\mathbf{N}\in\mathbb{N}^{2K \times 2K}$ and $\mathbf{M}\in\mathbb{N}^{K \times K}$ are constructed as follows:
\[
      \mathbf{N} := 
      \begin{pmatrix}
    \bkappa_{R_1}(\beta_1^+) \\
    \bkappa_{R_1}(\beta_1^-) \\[-1mm]
    \vdots \\
    \bkappa_{R_K}(\beta_K^+) \\
    \bkappa_{R_K}(\beta_K^-)    
    \end{pmatrix} \qquad
    \mathbf{M} := 
    \begin{pmatrix}
    \bbbchi(R_1) \\[-1mm]
    \vdots \\
    \bbbchi(R_K)   
    \end{pmatrix}  .
\]
\end{definition}
The matrix $\mathbf{N}$ systematically organises the local connectivity information of all planting vertices across all rule graphs in the edge iterated graph system. 


The matrix $\mathbf{M}$ captures the total number of edges of each colour in each rule graph~$R_i$. 
Specifically, the entry $[\mathbf{M}]_{ij}$ represents the number of edges in $R_i$ that have colour~$j$. 
This matrix provides a concise summary of the colour distribution across all seed graphs in the iterated graph system.
See Figure~\ref{fig:frank} for an example.
\begin{figure}[htb]
    \begin{minipage}{0.54\textwidth}
\centering
\begin{tikzpicture}[scale=0.45]
\draw (-5.5,-6) node {\normalsize \textbf{Rule\,:}};
\draw (6,  0) node {\text{$R_1$}};
\draw (6, -3) node {\text{$R_2$}};
\draw (6, -5) node {\text{$R_3$}};
\draw (6, -7) node {\text{$R_4$}};
\draw (6,-10) node {\text{$R_5$}};
\draw (6,-13) node {\text{$R_6$}};

\draw (0,  -0.5) node {\text{$\beta_1^+$}};
\draw (3.5,  -0.5) node {\text{$\beta_1^-$}};

\draw (0,  -3.5) node {\text{$\beta_2^+$}};
\draw (3.5,  -3.5) node {\text{$\beta_2^-$}};

\draw (0,  -5.5) node {\text{$\beta_3^+$}};
\draw (4,  -5.5) node {\text{$\beta_3^-$}};

\draw (0,  -7.5) node {\text{$\beta_4^+$}};
\draw (3.5,  -7.5) node {\text{$\beta_4^-$}};

\draw (0,  -10.5) node {\text{$\beta_5^+$}};
\draw (2.5,  -10.5) node {\text{$\beta_5^-$}};

\draw (0,  -13.5) node {\text{$\beta_6^+$}};
\draw (2.5,  -13.5) node {\text{$\beta_6^-$}};

\tikzstyle{every node}=[circle, draw, fill=black!20,
                        inner sep=0pt, minimum width=4pt]

\tikzstyle{cat}=[fill=none, draw=none]

\foreach \i in {0,-3,-5,-7,-10,-13}{
             \draw[->,>=stealth] (-1.35,\i)--(-0.65,\i);
}

   \foreach \i in {0}{

            \draw[black, thick] (-3,\i)--(-2,\i);

            \node at (-3,\i) {};
            \node at (-2,\i) {};
   }

\draw[black, thick] (0,0)--(1,0);
\draw[black, thick] (1,0)--(2,0);
\draw[black, thick] (1,0)--(1,1);
\draw[black, thick] (1,0)--(1,-1);
\draw[blue,  thick] (2,0)--(3,-0);

\node at (3,0) {};
\node at (0,0) {};
\node at (2,0) {};
\node at (1,0) {};
\node at (1,1) {};
\node at (1,-1) {};


   \foreach \i in {-3}{

            \draw[blue, thick] (-3,\i)--(-2,\i);

            \node at (-3,\i) {};
            \node at (-2,\i) {};
   }

\draw[blue,  thick] (0,-3)--(1,-3);
\draw[green, thick] (1,-3)--(1,-2);
\draw[green, thick] (1,-3)--(2,-3);
\draw[cyan,  thick] (3,-3)--(2,-3);

\node at (0,-3){};
\node at (1,-3){};
\node at (2,-3){};
\node at (1,-2){};

\node at (3,-3){};

   \foreach \i in {-5}{

            \draw[green, thick] (-3,\i)--(-2,\i);

            \node at (-3,\i) {};
            \node at (-2,\i) {};
   }

\draw[blue, thick] (0,-5)--(1,-5);
\draw[blue, thick] (1,-5)--(2,-5);
\draw[blue, thick] (1,-5)--(1,-5);
\draw[blue, thick] (1,-5)--(1,-4);
\draw[yellow, thick] (3,-5)--(3,-4);
\draw[yellow, thick] (3,-5)--(4,-5);
\draw[green, thick] (2,-5)--(3,-5);
\draw[green, thick] (2,-5)--(2,-4);

\foreach \i in {0,1,2,3} {
  \node at (\i,-5) {};
}

\foreach \i in {1,2} {
  \node at (\i,-4) {};
}
\node at (4,-5){};

\node at (3,-4){};

   \foreach \i in {-7}{

            \draw[red, thick] (-3,\i)--(-2,\i);

            \node at (-3,\i) {};
            \node at (-2,\i) {};
   }

\draw[red, thick] (0,-7)--(1,-7);
\draw[yellow, thick] (1,-7)--(2,-7);
\draw[red, thick] (1,-7)--(1,-6);

\draw[cyan, thick] (2,-7)--(3,-7);
\draw[cyan, thick] (2,-7)--(2,-6);
\draw[cyan, thick] (2,-7)--(2,-8);

\foreach \i in {0,1,2,3} {
  \node at (\i,-7) {};
}

\foreach \i in {1,2} {
  \node at (\i,-6) {};
}

\node at (2,-8) {};

   \foreach \i in {-10}{

            \draw[yellow, thick] (-3,\i)--(-2,\i);

            \node at (-3,\i) {};
            \node at (-2,\i) {};
   }

\draw[red, thick] (0,-10)--(1,-10);
\draw[cyan, thick] (1,-10)--(2,-10);
\draw[yellow, thick] (1,-10)--(1,-9);
\draw[yellow, thick] (1,-10)--(1,-11);

\node at (0,-10) {};
\node at (2,-10) {};
\node at (1,-10) {};
\node at (1,-9) {};
\node at (1,-11) {};


   \foreach \i in {-13}{

            \draw[cyan, thick] (-3,\i)--(-2,\i);

            \node at (-3,\i) {};
            \node at (-2,\i) {};
   }

\draw[red, thick] (0,-13)--(1,-13);
\draw[cyan, thick] (1,-13)--(2,-13);
\draw[red, thick] (1,-13)--(1,-12);

\node at (0,-13){};
\node at (1,-13){};
\node at (2,-13){};
\node at (1,-12){};

        %
\end{tikzpicture}
\end{minipage}
\begin{minipage}{0.45\textwidth}

\[ \mathbf{M}=
    \begin{tikzpicture}[baseline=-\the\dimexpr\fontdimen22\textfont2\relax ]
        \matrix [matrix of math nodes,left delimiter=(,right delimiter=)] (m)
{
4 & 1 & 0 & 0 & 0 & 0 \\
0 & 1 & 2 & 0 & 0 & 1 \\
0 & 3 & 2 & 0 & 2 & 0 \\
0 & 0 & 0 & 2 & 1 & 3 \\
0 & 0 & 0 & 1 & 2 & 1 \\
0 & 0 & 0 & 2 & 0 & 1 \\
};
        \draw[thin, draw=white, fill=green, opacity=0.3] (m-1-1.south west) rectangle (m-1-1.north east);
        \draw[thin, draw=white, fill=cyan, opacity=0.3] (m-3-2.south west) rectangle (m-2-3.north east);
        \draw[thin, draw=white, fill=pink, opacity=0.3] (m-6-4.south west) rectangle (m-4-6.north east);
    \end{tikzpicture}
\]

\end{minipage}
    \caption{A reducible iterated graph system}
\label{fig:frank}
\end{figure}
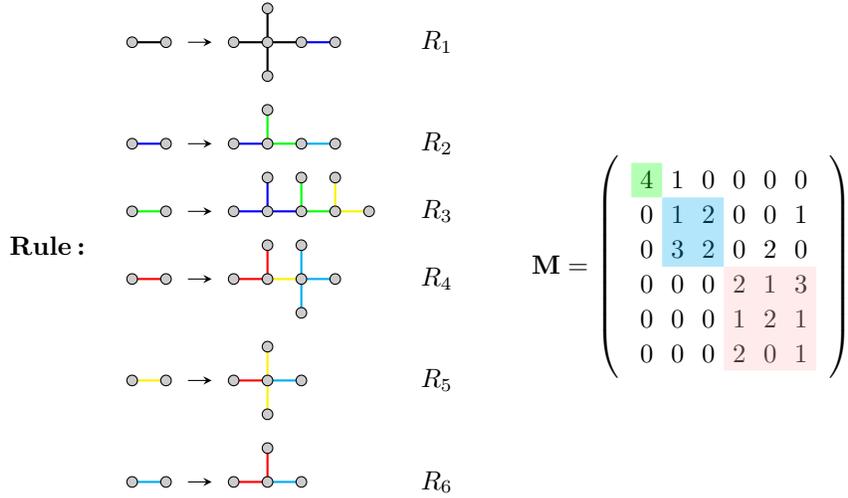  

We now begin to analyse the growth rate of path lengths in an EIGS.
Recall that, whenever we discuss metrics, we transform directed graphs into their underlying undirected graphs.
Throughout this paper, distances are always defined with respect to these underlying undirected graphs.
That is, when considering distances, we will always ignore the direction of edges.

\begin{definition}\label{def:Path Matrix D}
In this paper, paths have no repeating vertices.
Denote one path between vertices $\beta_i^+$ and $\beta_i^-$ in underlying (undirected) $R_i$ by $P_i(\beta_i^+,\beta_i^-)$, and define
\[
  \mathcal{P}_i :=\big\{P_i(\beta_i^+,\beta_i^-) : P_i(\beta_i^+,\beta_i^-) \subset R_i \big\} .
\]
Consider the Cartesian product 
\[
  \mathcal{P} := \prod_{j=1}^{ K} \mathcal{P}_j .
\]
Each element, or {\em choice}, 
$\mathcal{C} = (P_1,\ldots,P_K)\in\mathcal{P}$ is a vector of length $K$ 
each of whose entries $P_i(\beta_i^+,\beta_i^-)$ is in $\mathcal{P}_i$.

For each $i \in [K]$, define
\[
  \mathcal{V}_i := \big\{ \bbbchi\big( P_i(\beta_i^+,\beta_i^-) \big) : P_i(\beta_i^+,\beta_i^-)\in\mathcal{P}_i\big\} .
\]
Let $\mathcal{D}$ be the set of all matrices 
\[
    \mathbf{D}_{\mathcal{C}}
  := \begin{pmatrix}\mathbf{d}_1\\
  \vdots\\
  \mathbf{d}_K\end{pmatrix}
  = \begin{pmatrix}\bbbchi([\mathcal{C}]_1)\\
  \vdots\\
  \bbbchi([\mathcal{C}]_K)\end{pmatrix} 
\]
where $\mathbf{d}_i\in \mathcal{V}_i$ for each $i \in [K]$ or, 
equivalently, $[\mathcal{C}]_i\in\mathcal{P}_i$. 
\end{definition}

\subsection{Reducible forms}

\begin{definition}\label{def:frobenius-form}
A non-negative matrix $\mathbf X\in\mathbb R^{K\times K}$ is
\emph{reducible} if there exists a permutation matrix $\mathbf P$
and an integer $\mathfrak h_{\mathbf X}\ge 2$ such that
\[
  \mathbf P\mathbf X\mathbf P^\top 
  = \begin{pmatrix}
      \mathbf B_1(\mathbf X) & \mathbf X_{12}         & \cdots & \mathbf X_{1\mathfrak h_{\mathbf X}}\\
      0                      & \mathbf B_2(\mathbf X) & \cdots & \mathbf X_{2\mathfrak h_{\mathbf X}}\\
      \vdots                 & \vdots                 & \ddots & \vdots\\
      0                      & 0                      & \cdots & \mathbf B_{\mathfrak h_{\mathbf X}}(\mathbf X)
     \end{pmatrix},
\]
where each diagonal block $\mathbf B_k(\mathbf X)$,
$k\in[\mathfrak h_{\mathbf X}]$, is irreducible.
The displayed form is called the {\em Frobenius normal form}.
If every block $\mathbf B_k(\mathbf X)$ is primitive,
then we call $\mathbf X$ \emph{primitive-Frobenius}.
Throughout this paper, we assume that all matrices are {\em primitive-Frobenius.}
This implies that every block is itself primitive.

\smallskip
We partition the index set $[K]$ into
$\mathfrak h_{\mathbf X}$ blocks
$\mathfrak B_1,\ldots,\mathfrak B_{\mathfrak h_{\mathbf X}}$,
each corresponding to a diagonal block $\mathbf B_k(\mathbf X)$.
Let
\[
   \mathfrak b_{\mathbf{X}}:[K]\longrightarrow[\mathfrak h_{\mathbf X}],
   \qquad
   \mathfrak{b}_{\mathbf{X}} (i)=k
   \text{ iff } i\in\mathfrak B_k .
\]
Write $i\rightsquigarrow_{\mathbf X} j$
if there exists $n\in\mathbb{N}$ such that $(\mathbf X^n)_{ij}>0$.
For a colour $\iota\in[K]$, set
\[
   \mathfrak r_{\mathbf X}(\iota)
   :=\{ i\in[K]:\iota\rightsquigarrow_{\mathbf X} i \} .
\]
For blocks $\mathfrak B_k,\mathfrak B_\ell$ we write
$\mathfrak B_k\rightsquigarrow_{\mathfrak B}\mathfrak B_\ell$
if some $i\in\mathfrak B_k$ and $j\in\mathfrak B_\ell$
satisfy $i\rightsquigarrow_{\mathbf X} j$.
Finally, for any colour $i\in[K]$ define the set of \emph{reachable blocks}
\[
   \mathfrak R_{\mathbf X}(i)
   :=\big\{
        k\in[\mathfrak h_{\mathbf X}]
        :\mathfrak b_{\mathbf X}(i)\rightsquigarrow_{\mathfrak B} k
     \big\} .
\]

When $\mathbf X=\mathbf N\in\mathbb R^{2K\times 2K}$, all of the above notions are understood with the ambient index set $[2K]$ in place of $[K]$.
In particular, $\mathfrak b_{\mathbf N}:[2K]\to[\mathfrak h_{\mathbf N}]$, and for each $\iota_{\deg}\in[2K]$ the reachable-index set and the reachable-block set are defined exactly as above with $\iota_{\deg}\in[2K]$ in place of $i\in[K]$.
When comparing $\mathbf N$ with $\mathbf M$, we also use the projection
\[
\pi:[2K]\to[K],
\qquad
\pi(2i-1)=\pi(2i)=i.
\]
\end{definition}

\begin{figure}

\begin{center} 
\begin{tikzpicture}[
  node distance=0.8cm and 2cm, 
  on grid,
  align=center,
  >={Stealth} 
]

\node (headerK) [] {$[K]$};
\node (headerBlank) [right=of headerK] {};
\node (headerHX) [right=of headerBlank] {$[\mathfrak{h}_{\mathbf{X}}]$};

\node (K) [draw, minimum size=0.8cm, below=of headerK] {$1, 2, 3$};
\node (dots1) [below=of K] {$\vdots$};
\node (mid) [draw, minimum size=0.8cm, below=of dots1] {$i-1, i$};
\node (dots2) [below=of mid] {$\vdots$};
\node (Kend) [draw, minimum size=0.8cm, below=of dots2] {$K-1, K$};

\node (blank) [right=of K] {};

\node (hX) [right=of blank] {1};
\node (dots3) [below=of hX] {$\vdots$};
\node (hX_mid) [below=of dots3] {$\mathfrak{b}_{\mathbf X}(i-1)=\mathfrak{b}_{\mathbf X}(i)$};
\node (dots4) [below=of hX_mid] {$\vdots$};
\node (hX_end) [below=of dots4] {$\mathfrak{h}_{\mathbf{X}}$};

\draw[->] (K.east) -- node[above] {$\mathfrak{b}_{\mathbf{X}}$} (hX.west);
\draw[->] (mid.east) -- node[above] {$\mathfrak{b}_{\mathbf{X}}$} (hX_mid.west);
\draw[->] (Kend.east) -- node[above] {$\mathfrak{b}_{\mathbf{X}}$} (hX_end.west);
\end{tikzpicture}
\end{center}
    \caption{An illustration of block maps}
    \label{fig:block-maps}
\end{figure}
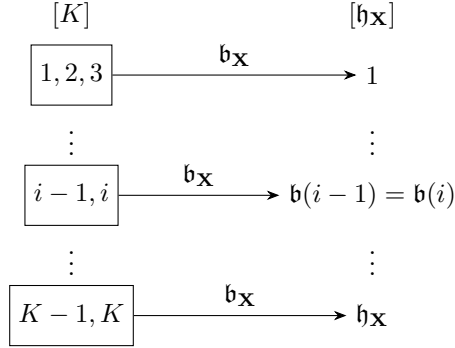

\begin{example}
An example of a reducible iterated graph system is shown in Figure~\ref{fig:frank}.
The transition matrix $\mathbf{M}$ corresponding to this system is written into block form, 
with blocks $\mathfrak{B}_1=\{1\}, \mathfrak{B}_2=\{2,3\}, \mathfrak{B}_3=\{4,5,6\}$.
\end{example}

In this paper, we utilize a significant extension of the Perron-Frobenius Theorem 
that is applicable to reducible non-negative matrices. 
We summarise the theorem, which appears relatively straightforward but has a non-trivial proof. 
We will present this result using the notation defined previously.

\begin{theorem}[Lustig-Uyanik Theorem \cite{lustig2016perron}]\label{thm:LU}
Let $\mathbf X\in\mathbb R^{K\times K}$ be 
a primitive-Frobenius matrix written in upper-block-triangular Frobenius normal form, 
and fix a colour $\iota\in[K]$ with block index $\mathfrak b_{\mathbf X}(\iota)\in[\mathfrak h_{\mathbf X}]$.
Let
\begin{align*}
\varkappa_{\mathbf X}(\iota) :=
\max\Big\{
   \Big|
     \big\{
       0\le j\le L:
       \rho\big(\mathbf B_{k_j}(\mathbf X)\big)
       =
       \max_{\ell\in\mathfrak R_{\mathbf X}(\iota)}
       \rho\big(\mathbf B_\ell(\mathbf X)\big)
     \big\}
   \Big| \\
   :
   \mathfrak b_{\mathbf X}(\iota)=k_0<k_1<\cdots<k_L,
   \ 
   \mathfrak B_{k_0}\rightsquigarrow_{\mathfrak B}\cdots
   \rightsquigarrow_{\mathfrak B}\mathfrak B_{k_L}
\Big\}.
\end{align*}
There exist positive constants $c_1,c_2$ depending only on $\mathbf X$
such that, for every integer $n\ge 1$,
\begin{equation}\label{eq:LU-bounds}
   c_1 
   n^{\varkappa_{\mathbf X}(\iota)-1} 
   \Big(
    \max_{k\in\mathfrak R_{\mathbf X}(\iota)}
    \rho\big(\mathbf B_k(\mathbf X)\big)
    \Big)^n
     \le  
   \Big\|\bbbxi_\iota \mathbf X^n\Big\|_1
     \le  
   c_2 
   n^{\varkappa_{\mathbf X}(\iota)-1} 
   \Big(
    \max_{k\in\mathfrak R_{\mathbf X}(\iota)}
    \rho\big(\mathbf B_k(\mathbf X)\big)
    \Big)^n.
\end{equation}
Moreover,
\begin{equation}\label{eq:LU-direction}
   \lim_{n\to\infty}
   \frac{\bbbxi_\iota \mathbf X^n}
        {\big\|\bbbxi_\iota \mathbf X^n\big\|_1}
    = 
   \mathbf v_\iota\neq\mathbf 0 .
\end{equation}
\smallskip
In particular, when $\mathbf X=\mathbf N\in\mathbb R^{2K\times 2K}$, the same statement holds with the ambient index set $[2K]$ in place of $[K]$.
That is, for each ${\iota_{\deg}}\in[2K]$, if $\varkappa_{\mathbf N}(\iota_{\deg})$ is defined by the same formula as above with $\iota_{\deg}$ in place of $\iota$, then there exist positive constants $c_1,c_2$ depending only on $\mathbf N$ such that, for every integer $n\ge 1$,
\[
   c_1 
   n^{\varkappa_{\mathbf N}(\iota_{\deg})-1} 
   \Big(
    \max_{k\in\mathfrak R_{\mathbf N}(\iota_{\deg})}
    \rho\big(\mathbf B_k(\mathbf N)\big)
    \Big)^n
     \le  
   \Big\|\bbbxi_{\iota_{\deg}} \mathbf N^n\Big\|_1
     \le  
   c_2 
   n^{\varkappa_{\mathbf N}(\iota_{\deg})-1} 
   \Big(
    \max_{k\in\mathfrak R_{\mathbf N}(\iota_{\deg})}
    \rho\big(\mathbf B_k(\mathbf N)\big)
    \Big)^n.
\]
Moreover,
\[
   \lim_{n\to\infty}
   \frac{\bbbxi_{\iota_{\deg}} \mathbf N^n}
        {\big\|\bbbxi_{\iota_{\deg}} \mathbf N^n\big\|_1}
    = 
   \mathbf v_{\iota_{\deg}}\neq\mathbf 0 .
\]
\end{theorem}

The current literature including \cite{lustig2016perron} does not 
explicitly establish the existence of the norm limit for reducible non-negative matrices. 
In particular, equations \eqref{eq:LU-bounds} and \eqref{eq:LU-direction} only provide 
two-sided estimates for the length of the iterates and guarantee convergence of their direction; 
they do not identify the actual limit of the length itself. 
Since our argument relies on that limit, 
we present the following lemma, which states its existence in precise form.

We write $f \overset{x\to x_0}{\asymp} g$ if $\lim_{x\to x_0} f(x)/g(x)=c$ where $c \in \mathbb R_+$.
\begin{lemma}\label{lemma:asymp-value}
$\displaystyle\big\Vert \bbbxi_\iota \mathbf{X}^n\big\Vert_1
\asymp 
    n^{\varkappa_{\mathbf X}(\iota)-1}
    \Big(
    \max_{k\in\mathfrak R_{\mathbf X}(\iota)}
    \rho\big(\mathbf B_k(\mathbf X)\big)
    \Big)^n
$.
\end{lemma}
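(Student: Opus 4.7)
The plan is to upgrade the two-sided estimate \eqref{eq:LU-bounds} to a one-sided asymptotic equivalence via a Jordan-form analysis, and then to bootstrap against \eqref{eq:LU-bounds} itself to pin down both the polynomial degree of the leading term and the non-vanishing of its coefficient.

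First, set $\lambda:=\max_{k\in\mathfrak R_{\mathbf X}(\iota)}\rho\bigl(\mathbf B_k(\mathbf X)\bigr)$. Because $\mathbf X$ is primitive-Frobenius, each diagonal block $\mathbf B_k(\mathbf X)$ is primitive; the Perron-Frobenius theorem then gives that $\rho\bigl(\mathbf B_k(\mathbf X)\bigr)$ is the unique eigenvalue of $\mathbf B_k(\mathbf X)$ of maximal modulus. Since the spectrum of an upper-block-triangular matrix is the disjoint union of the spectra of its diagonal blocks, it follows that $\lambda$ is the \emph{unique} eigenvalue of $\mathbf X$ of modulus $\lambda$, every other eigenvalue having modulus strictly smaller. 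Writing $\mathbf X=\mathbf P\mathbf J\mathbf P^{-1}$ in Jordan canonical form and applying the binomial expansion $J_m(\mu)^n=\sum_{k=0}^{m-1}\binom{n}{k}\mu^{n-k}N^k$ blockwise, one obtains
\[
  \bxi_\iota\mathbf X^n
  \;=\;
  \binom{n}{M-1}\,\lambda^{\,n-M+1}\,\mathbf c
  \;+\;\mathbf R_n,
\]
where $M$ is the size of the largest Jordan block for $\lambda$ on which the left action of $\bxi_\iota$ is non-trivial, $\mathbf c$ is a fixed row vector independent of $n$, and $\mathbf R_n=o\bigl(n^{M-1}\lambda^n\bigr)$: every omitted Jordan contribution either carries an exponential factor $|\mu|^n$ with $|\mu|<\lambda$, or comes from a Jordan block for $\lambda$ of size strictly less than $M$.

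Next, pass to $\ell^{1}$-norms. Using $\binom{n}{M-1}\sim n^{M-1}/(M-1)!$ and the reverse triangle inequality applied to the error,
\[
  \bigl\|\bxi_\iota\mathbf X^n\bigr\|_1
  \;=\;
  \frac{\|\mathbf c\|_1}{(M-1)!\,\lambda^{M-1}}\,n^{M-1}\lambda^n
  \;+\;o\bigl(n^{M-1}\lambda^n\bigr).
\]
Juxtaposing this with the two-sided bound \eqref{eq:LU-bounds} forces two conclusions simultaneously. The exponents must match, $M-1=\varkappa(\iota)-1$: if $M-1<\varkappa(\iota)-1$ the Lustig-Uyanik lower bound is violated, and if $M-1>\varkappa(\iota)-1$ the upper bound is violated. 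The leading coefficient cannot vanish either, for if $\mathbf c=\mathbf 0$ then $\|\bxi_\iota\mathbf X^n\|_1=o\bigl(n^{\varkappa(\iota)-1}\lambda^n\bigr)$, once again contradicting the lower bound. Consequently $\|\bxi_\iota\mathbf X^n\|_1\big/\bigl(n^{\varkappa(\iota)-1}\lambda^n\bigr)$ converges to the positive constant $\|\mathbf c\|_1/((\varkappa(\iota)-1)!\,\lambda^{\varkappa(\iota)-1})$, which is exactly the $\asymp$-equivalence claimed.

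The clever move is the bootstrap itself: rather than computing the Jordan structure of a reducible non-negative block-triangular matrix directly, which would require a delicate combinatorial analysis of block chains and their associated Jordan chains in order to identify $M$ a priori, we lean on the already-established Lustig-Uyanik bound to pin down both $M$ and the non-vanishing of $\mathbf c$ simultaneously. The only genuinely delicate prerequisite is the spectral-gap assertion in the first step, that $\lambda$ is the unique eigenvalue of $\mathbf X$ of modulus $\lambda$; this is precisely where the \emph{primitivity} (not merely irreducibility) of each diagonal block is indispensable, since otherwise each block could contribute peripheral eigenvalues of the same modulus, producing oscillatory subleading terms that would prevent the ratio from converging.
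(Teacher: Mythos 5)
Your route is genuinely different from the paper's, and in essence it works. The paper argues by hand: blockwise Perron--Frobenius power convergence (its step (i)), an explicit computation for a two-block upper-triangular matrix with equal spectral radii producing one extra factor of $n$ (step (ii)), and an inductive scan over the diagonal blocks that accumulates the polynomial power $\varkappa(\iota)-1$ (step (iii)). You instead expand $\bxi_\iota\mathbf X^n$ in Jordan canonical form and then bootstrap against the Lustig--Uyanik bounds \eqref{eq:LU-bounds} to identify both the polynomial degree ($M-1=\varkappa(\iota)-1$) and the non-vanishing of the leading coefficient. This buys you two things the paper's sketch does not make explicit: you never have to analyse how same-radius blocks chain together (the part the paper disposes of rather briskly in its scan), and you obtain actual convergence of the ratio $\big\Vert\bxi_\iota\mathbf X^n\big\Vert_1\big/\big(n^{\varkappa(\iota)-1}\lambda^n\big)$ to a positive constant, which is exactly what the paper's convention for $\asymp$ (existence of the limit of the ratio) demands.

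There is, however, one step that is wrong as written: the spectral-gap assertion. You set $\lambda=\max_{k\in\mathfrak R_{\mathbf X}(\iota)}\rho\big(\mathbf B_k(\mathbf X)\big)$, a maximum over the blocks \emph{reachable from} $\iota$ only, and then claim that $\lambda$ is the unique eigenvalue of the whole matrix $\mathbf X$ of modulus $\lambda$, all other eigenvalues being strictly smaller in modulus. Since the spectrum of $\mathbf X$ is the union of the spectra of \emph{all} its diagonal blocks, a block not reachable from $\iota$ may have spectral radius strictly larger than $\lambda$ (already $\mathbf X=\big(\begin{smallmatrix}1&0\\0&2\end{smallmatrix}\big)$ with $\iota$ in the first block shows this), and such a block may even contribute non-real eigenvalues of modulus exactly $\lambda$; primitivity only controls each block's peripheral spectrum, not its interior eigenvalues. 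With eigenvalues of modulus at least $\lambda$ present, your Jordan expansion does not have the stated form with remainder $o\big(n^{M-1}\lambda^n\big)$ unless you first show that $\bxi_\iota$ carries no component along the corresponding Jordan chains. The clean repair is the reduction the paper formalises in Definition~\ref{def:trim}: every coordinate of $\bxi_\iota\mathbf X^n$ outside $\mathfrak r_{\mathbf X}(\iota)$ vanishes (an entry $[\mathbf X]_{ij}>0$ with $i$ reachable forces $j$ reachable), so $\big\Vert\bxi_\iota\mathbf X^n\big\Vert_1$ equals the corresponding norm for the trimmed matrix $\mathcal T_\iota(\mathbf X)$ acting on the restricted basis vector. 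For $\mathcal T_\iota(\mathbf X)$ every diagonal block has spectral radius at most $\lambda$, and by primitivity $\lambda$ is then genuinely the unique eigenvalue on the circle of radius $\lambda$. After this reduction, the rest of your argument --- the Jordan expansion, the reverse triangle inequality, and the bootstrap forcing $M-1=\varkappa(\iota)-1$ and $\mathbf c\neq\mathbf 0$ --- goes through as you wrote it.
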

\begin{proof}
Replacing $\mathbf X$ by the principal submatrix indexed by $\mathfrak r_{\mathbf X}(\iota)$, and relabelling the indices if necessary, we may assume that every diagonal block of $\mathbf X$ is reachable from $\mathfrak b_{\mathbf X}(\iota)$.
This does not change $\|\bbbxi_\iota \mathbf X^n\|_1$, $\varkappa_{\mathbf X}(\iota)$, or $\max_{k\in \mathfrak R_{\mathbf X}(\iota)} \rho(\mathbf B_k(\mathbf X))$.

Set
$
\lambda := \max_{k\in \mathfrak R_{\mathbf X}(\iota)} \rho(\mathbf B_k(\mathbf X)).
$
Let $\mathbf{1}$ be the all-ones column vector of the appropriate size.
Since $\mathbf X$ is block upper triangular, its spectrum is the union of the spectra of the diagonal blocks $\mathbf B_k(\mathbf X)$.
For every diagonal block $\mathbf B_k(\mathbf X)$ with $\rho(\mathbf B_k(\mathbf X)) = \lambda$, primitivity implies that $\lambda$ is the only eigenvalue of $\mathbf B_k(\mathbf X)$ on the circle $\{ z \in \mathbb{C} : |z| = \lambda \}$.
Hence $\lambda$ is the only eigenvalue of $\mathbf X$ with modulus $\lambda$.

Let $\mathbf X = \mathbf S \mathbf J \mathbf S^{-1}$ be the Jordan decomposition of $\mathbf X$ over $\mathbb{C}$.
For a Jordan block $\mathbf J_s(\mu) = \mu \mathbf I_s + \mathbf U_s$, where $\mathbf U_s$ is nilpotent, one has
\[
\mathbf J_s(\mu)^n = \sum_{q=0}^{s-1} \binom{n}{q} \mu^{n-q} \mathbf U_s^q.
\]
Therefore every entry of $\mathbf X^n$, and hence the scalar $\bbbxi_\iota \mathbf X^n \mathbf{1}$, is a finite linear combination of terms of the form $n^q \mu^n$.
The terms with $\mu = \lambda$ combine into a polynomial in $n$ multiplied by $\lambda^n$, while every term with $|\mu| < \lambda$ is $o(n^d \lambda^n)$ for every fixed $d$.
If the polynomial corresponding to $\mu = \lambda$ were identically zero, then $\|\bbbxi_\iota \mathbf X^n\|_1 = o(\lambda^n)$, which contradicts the lower bound in Theorem \ref{thm:LU}.
Hence there exist an integer $d \ge 0$ and a constant $c \neq 0$ such that
\[
\|\bbbxi_\iota \mathbf X^n\|_1 = \bbbxi_\iota \mathbf X^n \mathbf{1} = c n^d \lambda^n + o(n^d \lambda^n).
\]

Theorem \ref{thm:LU} yields positive constants $c_1$ and $c_2$ such that
\[
c_1 n^{\varkappa_{\mathbf X}(\iota)-1} \lambda^n
\le
\|\bbbxi_\iota \mathbf X^n\|_1
\le
c_2 n^{\varkappa_{\mathbf X}(\iota)-1} \lambda^n
\]
for all $n \ge 1$.
Comparing this with the asymptotic formula above shows that
$
d = \varkappa_{\mathbf X}(\iota) - 1.
$

The lower bound above implies that $\|\bbbxi_\iota \mathbf X^n\|_1 > 0$ for all sufficiently large $n$.
Hence the leading coefficient $c$ must satisfy $c > 0$.
Therefore
\[
\frac{\|\bbbxi_\iota \mathbf X^n\|_1}{n^{\varkappa_{\mathbf X}(\iota)-1} \lambda^n} \to c \in \mathbb{R}_+.
\]
By our notation, this is exactly
$
\|\bbbxi_\iota \mathbf X^n\|_1 \asymp n^{\varkappa_{\mathbf X}(\iota)-1} \lambda^n.
$
\end{proof}

\begin{corollary}\label{cor:row-vector-asymp}
Let $\mathbf u\in\mathbb R_{\ge 0}^{1\times 2K}\setminus\{\mathbf 0\}$.
Then
\[
\big\|\mathbf u \mathbf N^n\big\|_1
\asymp
\max_{a:[\mathbf u]_a>0}
\left\{
n^{\varkappa_{\mathbf N}(a)-1}
\Big(
\max_{k\in\mathfrak R_{\mathbf N}(a)}
\rho\big(\mathbf B_k(\mathbf N)\big)
\Big)^n
\right\}.
\]
\end{corollary}

\begin{proof}
Write
$
\mathbf u=\sum_{a:[\mathbf u]_a>0}[\mathbf u]_a\bbbxi_a.
$
By Lemma~\ref{lemma:asymp-value},
\[
\big\|\mathbf u \mathbf N^n\big\|_1
=
\sum_{a: [\mathbf u]_a>0} [\mathbf u]_a\big\|\bbbxi_a \mathbf N^n\big\|_1
\asymp
\max_{a:[\mathbf u]_a>0}\big\|\bbbxi_a \mathbf N^n\big\|_1.
\]
The conclusion follows immediately.
\end{proof}

\subsection{Preliminaries}

In~\cite{li2024scale,neroli2024fractal}, the irreducible cases for the following lemmas have been proved.
Here, we briefly state the lemmas required for the reducible case.

The results in the following lemma are not listed directly in~\cite{li2024scale,neroli2024fractal} 
but can be obtained by easy generalization. 
Therefore, we present the lemma directly without proof.

\begin{lemma}\label{lemma:preivous_results}
\begin{enumerate}[label=(\roman*)]
    \item $\bkappa (v_{\iota}^{m,n})= \bkappa(v_{\iota}^{m,m}) \mathbf{N}^{n-m}$, 
          where $v_{\iota}^{m,n}$ is a vertex in $\Xi^n$ that appears for the first time in $\Xi^m$ for $n \geq m$.
    \item $\big|E(\Xi_{\iota}^n)\big|=\big\Vert \bbbxi_\iota  \mathbf{M}^n \big\Vert_1 $.
    \item $\displaystyle\big|V(\Xi_{\iota}^n)\big|\asymp \sum_{i=0}^n \big\Vert \bbbxi_\iota  \mathbf{M}^i \big\Vert_1$.
    \item $\dist_{\Xi_\iota^n}(\mathfrak{v}^+,\mathfrak{v}^-) 
    = \min_{\mathbf D_1,\cdots,\mathbf D_n \in \mathcal D} \big\Vert \bbbxi_\iota\prod_{k=1}^n \mathbf D_k\big\Vert_1$.
    \item $\displaystyle\diam(\Xi_\iota^n) \asymp \sum_{i=1}^n \dist_{\Xi_\iota^i}(\mathfrak{v}^+,\mathfrak{v}^-)$ 
    where $\diam(*)$ denotes the diameter of $*$ .
\end{enumerate}
\end{lemma}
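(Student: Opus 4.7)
The plan is to treat parts (i)--(iii) as bookkeeping identities that descend verbatim from the primitive case, since nothing in their derivations uses irreducibility, and to make the structural decomposition driving (iv)--(v) explicit.

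For (i), I would induct on $t = n-m$. The base case $t=0$ is tautological. For the inductive step, in passing from $\Xi_\iota^n$ to $\Xi_\iota^{n+1}$ every edge incident to $v^{m,n}$ of color $i$ is replaced by the rule graph $R_i$, with $v^{m,n}$ playing the role of $\alpha_i^+$ or $\alpha_i^-$ according to whether the edge is outgoing or incoming. Summing these local contributions across all incident edges is exactly the matrix-vector product $\bkappa(v^{m,n})\mathbf{N}$ by construction of the rows of $\mathbf{N}$. For (ii), one inducts on $n$ using $\bchi(\Xi_\iota^{n+1}) = \bchi(\Xi_\iota^n)\mathbf{M}$, which follows because substitution of an edge of color $j$ produces the edge-color profile $\bchi(R_j) = [\mathbf{M}]_{j,\cdot}$; the base case $\bchi(\Xi_\iota^0) = \bxi_\iota$ is immediate, and taking $L^1$-norms yields the claim. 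For (iii), $V(\Xi_\iota^n)$ partitions into generations indexed by the earliest step $0 \le m \le n$ at which each vertex appears; at step $m$ the number of newly created vertices is bounded above and below by constant multiples of the number of edges of each color present in $\Xi_\iota^{m-1}$, with constants depending only on the fixed rule graphs. Summing over $m$ produces $|V(\Xi_\iota^n)| \asymp \sum_{i=0}^n \|\bxi_\iota \mathbf{M}^i\|_1$. Reducibility of $\mathbf{M}$ or $\mathbf{N}$ plays no role in any of these three arguments.

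For (iv), the central observation is that any simple path $\pi$ in $\Xi_\iota^n$ from $\mathfrak{v}^+$ to $\mathfrak{v}^-$ admits a recursive decomposition. At level one it selects a planting-to-planting path $P \in \mathcal{P}_\iota$ through the copy of $R_\iota$ substituted for $\Xi_\iota^0$; for each edge of $P$ of color $j$, it further selects a planting-to-planting path through the next substitution copy of $R_j$; and so on down to step $n$. The edge-color profile contributed at level $k$ is exactly a row of some matrix $\mathbf{D}_{i_k} \in \mathcal{D}$, so the total edge count along $\pi$ equals $\|\bxi_\iota \prod_{k=1}^n \mathbf{D}_{i_k}\|_1$. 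Minimizing over all admissible choice sequences yields the stated formula; conversely, any choice sequence defines a genuine simple path since planting-to-planting subpaths concatenate consistently along substitutions.

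For (v), the lower bound $\diam(\Xi_\iota^n) \ge \dist_{\Xi_\iota^n}(\mathfrak{v}^+,\mathfrak{v}^-)$ is immediate, and iterating this telescoping estimate across the $n$ generations delivers the sum. For the upper bound, any two vertices $u,v \in \Xi_\iota^n$ can be joined by concatenating, for each generation $1 \le i \le n$, a segment of length at most $\dist_{\Xi_\iota^i}(\mathfrak{v}^+,\mathfrak{v}^-)$ connecting the ancestors of $u$ and $v$ at that level; summing yields $\diam(\Xi_\iota^n) \le C \sum_{i=1}^n \dist_{\Xi_\iota^i}(\mathfrak{v}^+,\mathfrak{v}^-)$ for a constant $C$ depending only on the rule graphs. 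The main obstacle is (iv), where the greedy choice of a shortest path at each level is not a priori globally optimal. The resolution is to observe that substitutions act locally and independently on disjoint edges, so simple paths cannot leave the rule copy in which they currently travel, and the minimum over global paths genuinely factorizes into a product of independent local minima, exactly as in the primitive case. Reducibility of the underlying matrices leaves this factorization untouched; its only effect is on the asymptotic growth of $\|\bxi_\iota \prod_{k=1}^n \mathbf{D}_{i_k}\|_1$, which is controlled by Lemma~\ref{lemma:asymp-value}.
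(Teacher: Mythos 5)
The paper does not give a proof of this lemma; it declares the statements ``can be obtained by easy generalization'' of \cite{li2024scale,neroli2024fractal} and presents them without proof, so there is no internal argument to match. Your sketches of (i)--(iv) are sound reconstructions of what that easy generalization must look like, and you correctly isolate the key structural reasons irreducibility is never used: (i)--(iii) are row-wise bookkeeping identities, and (iv) rests on the fact that substitutions act independently on disjoint edges, so the minimum over planting-to-planting choices can be taken uniformly within each color class at each level. (One wording fix for (iv): a simple path \emph{does} leave the rule copy it is in; what it cannot do is re-enter, since both planting vertices get consumed.)

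Part (v), however, has a genuine gap. The claim that ``iterating the telescoping estimate $\diam(\Xi_\iota^n)\ge\dist_{\Xi_\iota^n}(\mathfrak v^+,\mathfrak v^-)$ across the $n$ generations delivers the sum'' is not an argument: iterating that single inequality never produces the sum $\sum_{i=1}^n\dist_{\Xi_\iota^i}(\mathfrak v^+,\mathfrak v^-)$, only the last term. A correct lower bound must exhibit a pair of vertices whose connecting path is forced to exit a nested chain of rule copies, picking up one $\dist_{\Xi_\iota^i}$ contribution per level. The upper bound sketch also needs care: naively bounding each end by the full diameter of its level-1 tile gives a recursion $\diam(\Xi_\iota^n)\lesssim 2\diam(\Xi_\iota^{n-1})+\cdots$, which blows up geometrically. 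More importantly, the statement itself appears to require the standing hypothesis $\ratedistance>1$ from the rest of the paper. For the binary-tree system of Figure~\ref{fig:binary_tree}, $\mathcal D$ contains the single matrix $\bigl(\begin{smallmatrix}1&1\\0&1\end{smallmatrix}\bigr)$, both diagonal blocks have spectral radius $1$, $\dist_{\Xi_\iota^i}(\mathfrak v^+,\mathfrak v^-)=i+1$, and $\diam(\Xi_\iota^n)\sim 2n$, while $\sum_{i=1}^n\dist_{\Xi_\iota^i}(\mathfrak v^+,\mathfrak v^-)\sim n^2/2$; the ratio tends to $0$, so (v) as literally stated fails. Under the hypothesis $\ratedistance>1$ the sum is dominated by its last term and (v) collapses to the cleaner (and true) statement $\diam(\Xi_\iota^n)\asymp\dist_{\Xi_\iota^n}(\mathfrak v^+,\mathfrak v^-)$; you should either flag the implicit hypothesis or prove that reduced form directly.
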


Using the Lustig-Uyanik Theorem, Lemma~\ref{lemma:asymp-value} and Corollary~\ref{cor:row-vector-asymp} and Lemma~\ref{lemma:preivous_results}, 
it is easy to prove the following lemma.

\begin{lemma}\label{lem:basic-properties}$ $
\begin{enumerate}
    \item $\displaystyle
    \deg(v_\iota^{m,n})
\asymp
\max_{\iota_{\deg}:[\bkappa(v_\iota^{m,m})]_{\iota_{\deg}}>0}
\left\{
(n-m)^{\varkappa_{\mathbf N}(\iota_{\deg})-1}
\Big(
\max_{k\in\mathfrak R_{\mathbf N}(\iota_{\deg})}
\rho\big(\mathbf B_k(\mathbf N)\big)
\Big)^{n-m}
\right\}$
    \item 
    $\displaystyle
    \big|E(\Xi_{\iota}^n)\big|
    \asymp  
    n^{\varkappa_{\mathbf{M}}(\iota)-1}\max_{k\in\mathfrak{R}_{\mathbf M}(\iota)} 
    \rho \big( \mathbf{B}_k (\mathbf{M}) \big)^n
    $.
    \item 
    $\displaystyle
    \big|V(\Xi_{\iota}^n)\big|
    \asymp  
    n^{\varkappa_{\mathbf{M}}(\iota)-1}\max_{k\in\mathfrak{R}_{\mathbf M}(\iota)} 
    \rho \big( \mathbf{B}_k (\mathbf{M}) \big)^n
    $.
\end{enumerate}
\end{lemma}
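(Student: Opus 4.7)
The plan is to reduce each of the three asymptotics to an $\ell^1$-norm of an iterate $\mathbf u\mathbf X^n$ via Lemma~\ref{lemma:preivous_results}, and then invoke Lemma~\ref{lemma:asymp-value}. The only thing I need beforehand is a mild extension of that lemma to arbitrary non-negative starting vectors: inspecting its proof shows that step (i) already treats a general row vector $\mathbf u\ge\mathbf 0$ with $\mathbf u\neq\mathbf 0$, and that the block scan in step (iii) depends only on which diagonal blocks of $\mathbf X$ are reachable from the support of $\mathbf u$. Consequently
\[
   \|\mathbf u\mathbf X^n\|_1
    \asymp
   n^{\varkappa(\mathbf u)-1}
   \Big(\max_{k\in\mathfrak R(\mathbf u)}\rho\big(\mathbf B_k(\mathbf X)\big)\Big)^n,
\]
where $\varkappa(\mathbf u)$ and $\mathfrak R(\mathbf u)$ are defined exactly as in Theorem~\ref{thm:LU} with $\{\iota\}$ replaced by the support of $\mathbf u$.

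Granting this extension, claim (2) is immediate: Lemma~\ref{lemma:preivous_results}(ii) identifies $|E(\Xi_\iota^n)|$ with $\|\boldsymbol\xi_\iota\mathbf M^n\|_1$, and a direct application of Lemma~\ref{lemma:asymp-value} with $\mathbf X=\mathbf M$ gives the stated asymptotic. For claim (3), Lemma~\ref{lemma:preivous_results}(iii) reduces the problem to
\[
  |V(\Xi_\iota^n)|
   \asymp
  \sum_{i=0}^n\|\boldsymbol\xi_\iota\mathbf M^i\|_1
   \asymp
  \sum_{i=0}^n i^{p-1}\lambda^i,
\]
where $p:=\varkappa_{\mathbf M}(\iota)$ and $\lambda:=\max_{k\in\mathfrak R_{\mathbf M}(\iota)}\rho\big(\mathbf B_k(\mathbf M)\big)$. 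Under the standing assumption $\lambda>1$, rewriting the sum as $n^{p-1}\lambda^n\sum_{j=0}^n\bigl((n-j)/n\bigr)^{p-1}\lambda^{-j}$ shows the final term dominates by a constant factor, so $|V(\Xi_\iota^n)|\asymp n^{p-1}\lambda^n$.

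Claim (1) requires a little more care. By Lemma~\ref{lemma:preivous_results}(i), $\bkappa(v_\iota^{m,n})=\bkappa(v_\iota^{m,m})\mathbf N^{n-m}$, and $\deg(v_\iota^{m,n})$ equals the $\ell^1$-norm of $\bkappa(v_\iota^{m,n})$ up to the factor of two separating out- and in-contributions. Applying the extended Lemma~\ref{lemma:asymp-value} with $\mathbf X=\mathbf N$ and starting vector $\mathbf u=\bkappa(v_\iota^{m,m})$ yields an asymptotic of the claimed shape, with exponent $\varkappa(\mathbf u)-1$ and base $\max_{k\in\mathfrak R(\mathbf u)}\rho(\mathbf B_k(\mathbf N))$. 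The main obstacle is then the identification of $(\varkappa(\mathbf u),\mathfrak R(\mathbf u))$ with $(\varkappa_{\mathbf N}(\iota),\mathfrak R_{\mathbf N}(\iota))$: one must show that the support of $\bkappa(v_\iota^{m,m})$ picks out exactly those color/orientation indices reachable from $\iota$ under $\rightsquigarrow_{\mathbf N}$. The forward inclusion is transparent, since every edge incident to a newly created vertex in $\Xi_\iota^m$ arises from a substitution chain originating in a color reachable from $\iota$; the reverse inclusion follows once $m$ is large enough for every reachable block to be represented at some planting vertex of $\Xi_\iota^m$. With both inclusions in hand, the block-maximum collapses to $\max_{k\in\mathfrak R_{\mathbf N}(\iota)}\rho(\mathbf B_k(\mathbf N))$ and the polynomial exponent to $\varkappa_{\mathbf N}(\iota)-1$, completing the proof.
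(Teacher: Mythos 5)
The paper gives no proof of this lemma, merely remarking that it follows from the Lustig--Uyanik Theorem, Lemma~\ref{lemma:asymp-value}, and Lemma~\ref{lemma:preivous_results}; so the comparison here is against the intended-but-unwritten argument. Your treatment of parts (2) and (3) is correct and exactly what the paper intends: part (2) is Lemma~\ref{lemma:preivous_results}(ii) composed with Lemma~\ref{lemma:asymp-value}, and in part (3) the rewriting
\[
   \sum_{i=0}^n i^{p-1}\lambda^i
   = n^{p-1}\lambda^n\sum_{j=0}^n\Bigl(\tfrac{n-j}{n}\Bigr)^{p-1}\lambda^{-j}
\]
together with the fact that the bracketed factor lies in $[0,1]$ and the geometric tail is summable does give a two-sided bound by $n^{p-1}\lambda^n$, provided $\lambda>1$. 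That hypothesis is real and the paper never writes it explicitly, so flagging it is a genuine improvement on the terse original.

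Part (1), however, contains a gap. Your reduction to an $\ell^1$-norm of $\bkappa(v_\iota^{m,m})\mathbf N^{n-m}$ via Lemma~\ref{lemma:preivous_results}(i) is correct, and you rightly identify that the exponent and base produced by Lemma~\ref{lemma:asymp-value} are governed by the support of the starting vector $\bkappa(v_\iota^{m,m})$. The problem is the claimed ``reverse inclusion'': the assertion that, for $m$ large enough, the support of $\bkappa(v_\iota^{m,m})$ reaches every block in $\mathfrak R_{\mathbf N}(\iota)$. This conflates two different things. Having every reachable block represented \emph{somewhere} among the planting vertices of $\Xi_\iota^m$ does not help for a \emph{fixed} vertex $v^{m,m}$: that single vertex is incident only to the edges of the one rule graph $R_j$ in which it was created, so $\bkappa(v^{m,m})$ has bounded support determined by $R_j$, independent of $m$. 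If the colors incident to $v^{m,m}$ all lie in a terminal block of $\mathbf N$ with spectral radius strictly below $\max_{k\in\mathfrak R_{\mathbf N}(\iota)}\rho(\mathbf B_k(\mathbf N))$, then $\deg(v^{m,n})$ grows at that slower rate and the stated asymptotic fails for that vertex. Indeed, the paper's own use of the lemma in the proof of Theorem~\ref{thm:degree-scaling-limit} replaces $\iota$ by a vertex-dependent color $\iota'$ in the numerator of $\hat\deg$, exactly to account for this. So the correct statement is that the exponent and base are $\varkappa_{\mathbf N}(\iota')-1$ and $\max_{k\in\mathfrak R_{\mathbf N}(\iota')}\rho(\mathbf B_k(\mathbf N))$, with $\iota'$ determined by the colors incident to $v^{m,m}$, rather than by the initial color $\iota$; your argument does not establish the identification you assert, and the identification is in general false.
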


\begin{proposition}\label{prop:distance-to-mass}
For any $\mathbf D \in \mathcal D$, every primitive block of $\mathbf D$ is contained in a unique primitive block of $\mathbf M$.
In particular, $\mathfrak h_{\mathbf D} \ge \mathfrak h_{\mathbf M}$.
\end{proposition}

\begin{proof}
Since each chosen path $P_i(\beta_i^+,\beta_i^-)$ is a subgraph of $R_i$, one has $\mathbf D \le \mathbf M$ entrywise.
Hence $\mathbf D^n \le \mathbf M^n$ for every $n \ge 1$.

Let $C$ be a primitive block of $\mathbf D$.
If $i,j \in C$, then there exist $m,n \ge 1$ such that
\[
[\mathbf D^m]_{ij} > 0
\quad \text{and} \quad
[\mathbf D^n]_{ji} > 0.
\]
Therefore
\[
[\mathbf M^m]_{ij} > 0
\quad \text{and} \quad
[\mathbf M^n]_{ji} > 0.
\]
Thus $i$ and $j$ lie in the same primitive block of $\mathbf M$.

So every primitive block of $\mathbf D$ is contained in a unique primitive block of $\mathbf M$.
Equivalently, the block partition induced by $\mathbf D$ refines the block partition induced by $\mathbf M$.
{ 
Hence $\mathfrak h_{\mathbf D} \ge \mathfrak h_{\mathbf M}$.}
\end{proof}

\begin{proposition}\label{prop:degree-to-mass}
Every primitive block of $\mathbf N$ projects under $\pi$ into a unique primitive block of $\mathbf M$.
In particular, $\mathfrak h_{\mathbf N} \ge \mathfrak h_{\mathbf M}$.
\end{proposition}

\begin{proof}
If $[\mathbf N]_{ab} > 0$, then the corresponding entry counts edges of colour $\pi(b)$ incident to the planting vertex indexed by $a$ in the rule graph $R_{\pi(a)}$.
Such an edge is, in particular, an edge of colour $\pi(b)$ in $R_{\pi(a)}$.
Hence
\[
[\mathbf N]_{ab} > 0
\implies
[\mathbf M]_{\pi(a),\pi(b)} > 0.
\]
Consequently, every directed path in the digraph of $\mathbf N$ projects under $\pi$ to a directed path in the digraph of $\mathbf M$.

Let $C$ be a primitive block of $\mathbf N$.
If $a,b \in C$, then $a$ and $b$ are mutually reachable in $\mathbf N$.
Therefore $\pi(a)$ and $\pi(b)$ are mutually reachable in $\mathbf M$.
So $\pi(C)$ is contained in a unique primitive block of $\mathbf M$.

For each primitive block $B$ of $\mathbf M$, choose any colour $i \in B$ and any index $a \in \{2i-1,2i\}$.
Let $C(a)$ be the primitive block of $\mathbf N$ containing $a$.
Since $i=\pi(a)\in \pi(C(a))$, the previous paragraph shows that $\pi(C(a)) \subseteq B$.

If two distinct primitive blocks of $\mathbf M$ produced the same block $C(a)$ of $\mathbf N$, then $\pi(C(a))$ would meet both of them, which is impossible.
Thus distinct primitive blocks of $\mathbf M$ give distinct primitive blocks of $\mathbf N$.
Hence $\mathfrak h_{\mathbf N} \ge \mathfrak h_{\mathbf M}$.
\end{proof}

\section{Fractal dimensions and multifractal spectrum}

In this section, our goal is to determine the coarse fractal spectrum of $\Xi_\iota$.
To do so, we first study the growth rate of distances.
A tile survives with positive diameter only when its colour stays in the same distance layer.
This leads to a Hausdorff dimension formula and to a finite discrete coarse fractal spectrum.
The core object in this section is therefore the limit metric space rather than the finite approximating graphs.
We analyse which descendant tiles remain visible after normalising the metric by the diameter, and which ones collapse to points.
Once the metric structure is understood, the Hausdorff dimension is recovered from the number and size of the surviving tiles.
This is a global quantity, so only the maximal surviving mass growth rate matters.
The coarse fractal spectrum is then obtained by examining Hausdorff dimensions of balls in the limit metric space.

\subsection{Growth of distances}

We begin with the metric side of the problem.
For each colour, the relevant quantity is the smallest possible exponential growth rate of the distance between the two planting vertices under repeated substitution.
The point of the local primitive stability assumption is that this optimal growth can still be controlled by a single primitive trimmed matrix.
This allows us to combine a Bellman-type minimisation with Perron--Frobenius theory.
Once this is done, both the planted distance and the diameter grow at the same exponential rate.

Fix an initial colour $\iota\in[K]$.
Throughout this section, for each colour $j\in\mathfrak r_{\mathbf M}(\iota)$, let
$R_j:=\bigcup_{\mathbf D\in\mathcal D}\mathfrak r_{\mathbf D}(j)$ and
$A_j(\mathbf D):=[\mathbf D]_{R_j\times R_j}$.
Also write
\[
\Lambda_{\mathcal D}(j)
:=
\min_{\mathbf D\in\mathcal D}
\max_{\ell\in\mathfrak R_{\mathbf D}(j)}
\rho\big(\mathbf B_\ell(\mathbf D)\big).
\]
We assume that, for every $j\in\mathfrak r_{\mathbf M}(\iota)$, there exists
$\mathbf D_j^*\in\mathcal D$ such that
$\rho\big(A_j(\mathbf D_j^*)\big)=\Lambda_{\mathcal D}(j)$, the matrix
$A_j(\mathbf D_j^*)$ is primitive, and replacing one row of $\mathbf D_j^*$ indexed by $R_j$
by the corresponding row of any other matrix in $\mathcal D$ still gives a primitive trimmed matrix.
We write $f(x) \succeq g(x)$ if there exists a constant $c>0$ independent of $x$, such that $f(x) \geq { c} g(x)$ for all $x$ in the domain.

\begin{theorem}\label{theorem:distance}
For every colour $j\in\mathfrak r_{\mathbf M}(\iota)$ there exists a constant $c_j\in\mathbb R_+$ such that
\[
\frac{1}{\Lambda_{\mathcal D}(j)^n}
\min_{\mathbf D_1,\ldots,\mathbf D_n\in\mathcal D}
\bigg\|
\bbbxi_j\prod_{m=1}^n \mathbf D_m
\bigg\|_1
\to c_j
\]
as $n\to\infty$.
Moreover, for every sequence $\mathbf D_1,\ldots,\mathbf D_n$ with $\mathbf D_m\in\mathcal D$ for each $m\in[n]$,
\[
\bigg\|
\bbbxi_j\prod_{m=1}^n \mathbf D_m
\bigg\|_1
\succeq
\Lambda_{\mathcal D}(j)^n.
\]
\end{theorem}

\begin{proof}
Fix $j\in\mathfrak r_{\mathbf M}(\iota)$ and write
$A(\mathbf D):=A_j(\mathbf D)$ and $\lambda:=\Lambda_{\mathcal D}(j)$.

We first show that $R_j$ is forward invariant under every $\mathbf D\in\mathcal D$.
If $a\in R_j$ and $[\mathbf D]_{ab}>0$, choose $\mathbf E\in\mathcal D$ and a path
$j=i_0,i_1,\ldots,i_m=a$ such that $[\mathbf E]_{i_{r-1}i_r}>0$ for all $r\in[m]$.
Replacing the $a$-th row of $\mathbf E$ by the $a$-th row of $\mathbf D$ preserves the path and adds the edge $a\to b$.
Hence $b\in R_j$.

Therefore, for every sequence $\mathbf D_1,\ldots,\mathbf D_n\in\mathcal D$,
\[
\bigg\|
\bbbxi_j\prod_{m=1}^n \mathbf D_m
\bigg\|_1
=
\bigg\|
\bbbxi_j\prod_{m=1}^n A(\mathbf D_m)
\bigg\|_1.
\]

Let $A^*:=A(\mathbf D_j^*)$ and let $\mathbf v_j^*>0$ be its right Perron vector, so
$A^*\mathbf v_j^*=\lambda\mathbf v_j^*$.
We claim that $A(\mathbf D)\mathbf v_j^*\ge \lambda\mathbf v_j^*$ for every $\mathbf D\in\mathcal D$.
Assume not.
Then for some $\mathbf D\in\mathcal D$ and some $a_0\in R_j$ we have
$[A(\mathbf D)\mathbf v_j^*]_{a_0}<\lambda[\mathbf v_j^*]_{a_0}$.
Let $\mathbf D'$ be obtained from $\mathbf D_j^*$ by replacing its $a_0$-th row by the $a_0$-th row of $\mathbf D$.
By assumption, $A(\mathbf D')$ is primitive.
Also $A(\mathbf D')\mathbf v_j^*\le \lambda\mathbf v_j^*$, and the inequality is strict in the coordinate $a_0$.
Choose $q$ with $A(\mathbf D')^q>\mathbf 0$ and set $\mathbf u:=A(\mathbf D')^q\mathbf v_j^*$.
Then $\mathbf u>\mathbf 0$ and $A(\mathbf D')\mathbf u<\lambda\mathbf u$ coordinatewise.
The Collatz--Wielandt formula gives $\rho(A(\mathbf D'))<\lambda$.
Since $A(\mathbf D')$ is primitive, its whole trim is reachable from $j$, so
\[
\max_{\ell\in\mathfrak R_{\mathbf D'}(j)}
\rho\big(\mathbf B_\ell(\mathbf D')\big)
=
\rho\big(A(\mathbf D')\big)
<
\lambda,
\]
contrary to the definition of $\lambda$.
This proves the claim.

Iterating the claim gives
$\prod_{m=1}^n A(\mathbf D_m)\mathbf v_j^*\ge \lambda^n\mathbf v_j^*$.
Hence
\[
\bbbxi_j\prod_{m=1}^n A(\mathbf D_m)\mathbf v_j^*
\ge
\lambda^n[\mathbf v_j^*]_j.
\]
Since $\|\mathbf w\|_1\ge (\mathbf w\mathbf v_j^*)/\|\mathbf v_j^*\|_\infty$ for every non-negative row vector $\mathbf w$, we obtain
\[
\bigg\|
\bbbxi_j\prod_{m=1}^n \mathbf D_m
\bigg\|_1
\ge
\frac{[\mathbf v_j^*]_j}{\|\mathbf v_j^*\|_\infty}\lambda^n.
\]
This proves the lower bound.

For the limit, define the Bellman operator
$\mathcal F_j:\mathbb R_{\ge 0}^{|R_j|}\to\mathbb R_{\ge 0}^{|R_j|}$ by
$[\mathcal F_j(\mathbf x)]_a:=\min_{\mathbf D\in\mathcal D}[A(\mathbf D)\mathbf x]_a$.
Because the rows are chosen independently,
\[
[\mathcal F_j^n(\mathbf 1)]_j
=
\min_{\mathbf D_1,\ldots,\mathbf D_n\in\mathcal D}
\bigg\|
\bbbxi_j\prod_{m=1}^n \mathbf D_m
\bigg\|_1.
\]
The claim above implies $\mathcal F_j(\mathbf v_j^*)\ge \lambda\mathbf v_j^*$.
Since $\mathbf D_j^*\in\mathcal D$, we also have
$\mathcal F_j(\mathbf v_j^*)\le A^*\mathbf v_j^*=\lambda\mathbf v_j^*$.
Thus $\mathcal F_j(\mathbf v_j^*)=\lambda\mathbf v_j^*$.

Set $\mathcal G_j:=\lambda^{-1}\mathcal F_j$.
Choose $a_0,b_0>0$ so that $a_0\mathbf v_j^*\le \mathbf 1\le b_0\mathbf v_j^*$.
Then $a_0\mathbf v_j^*\le \mathcal G_j^n(\mathbf 1)\le b_0\mathbf v_j^*$ for all $n$.
Write $\mathbf y_n:=\mathcal G_j^n(\mathbf 1)$ and
$\beta_n:=\max_a[\mathbf y_n]_a/[\mathbf v_j^*]_a$.
Then $(\beta_n)$ is decreasing, hence converges.

Let $\mathbf y$ be an accumulation point of $(\mathbf y_n)$.
If $\mathbf y\ne \beta\mathbf v_j^*$, where $\beta:=\lim\beta_n$, then
$\mathbf y<\beta\mathbf v_j^*$ in at least one coordinate.
Choose $q$ with $(A^*)^q>\mathbf 0$.
Since $\mathcal F_j(\mathbf x)\le A^*\mathbf x$ for every $\mathbf x\ge \mathbf 0$, we get
\[
\mathcal G_j^q(\mathbf y)
\le
\lambda^{-q}(A^*)^q\mathbf y
<
\beta\lambda^{-q}(A^*)^q\mathbf v_j^*
=
\beta\mathbf v_j^*,
\]
which contradicts the definition of $\beta$.
Hence every accumulation point of $(\mathbf y_n)$ equals $\beta\mathbf v_j^*$, so
$\mathbf y_n\to \beta\mathbf v_j^*$.

Therefore
\[
\frac{1}{\lambda^n}
\min_{\mathbf D_1,\ldots,\mathbf D_n\in\mathcal D}
\bigg\|
\bbbxi_j\prod_{m=1}^n \mathbf D_m
\bigg\|_1
=
[\mathbf y_n]_j
\to
\beta[\mathbf v_j^*]_j.
\]
Since $\mathbf v_j^*>0$, the limit is strictly positive.
\end{proof}

\begin{lemma}\label{lemma:diameter}
For every colour $j\in\mathfrak r_{\mathbf M}(\iota)$,
\[
{ \dist_{\Xi_j^n}(\mathfrak v^+,\mathfrak v^-)}
\asymp
\diam(\Xi_j^n)
\asymp
\Lambda_{\mathcal D}(j)^n.
\]
\end{lemma}

\begin{proof}
By Lemma~\ref{lemma:preivous_results}(iv) and Theorem~\ref{theorem:distance},
\[
\frac{\dist_{\Xi_j^n}(\mathfrak v^+,\mathfrak v^-)}{\Lambda_{\mathcal D}(j)^n}
\to c_j
\]
for some $c_j\in\mathbb R_+$.
Hence $\dist_{\Xi_j^n}(\mathfrak v^+,\mathfrak v^-)\asymp \Lambda_{\mathcal D}(j)^n$.

Every admissible path between the planting vertices has length at least $2$, so every row of $\mathbf D_j^*$ has row sum at least $2$.
The forward invariance of $R_j$ shows that no positive entry is lost when we pass to the trim.
Thus every row of $A_j(\mathbf D_j^*)$ still has row sum at least $2$, and the Collatz--Wielandt formula gives
$\Lambda_{\mathcal D}(j)\ge 2$.
Since $\sum_{m=1}^n \Lambda_{\mathcal D}(j)^m\asymp \Lambda_{\mathcal D}(j)^n$, Lemma~\ref{lemma:preivous_results}(v) yields
\[
\diam(\Xi_j^n)
\asymp
\sum_{m=1}^n \dist_{\Xi_j^m}(\mathfrak v^+,\mathfrak v^-)
\asymp
\Lambda_{\mathcal D}(j)^n.
\]
\end{proof}

\subsection{Scaling limit}

We now pass from distance growth on finite graphs to the limit metric space.
The key observation is that not every descendant tile remains visible after normalisation by the diameter.
A tile survives precisely when its colour stays in the same distance layer as the initial colour.
This gives a canonical decomposition of the limit into a visible part built from surviving tiles and a collapsed part formed by points.
The latter will be negligible for Hausdorff dimension, but it must still be isolated carefully.

For each colour $j\in[K]$, set
\[
I_{\mathrm{dist}}(j)
:=
\big\{
a\in\mathfrak r_{\mathbf M}(j):
\Lambda_{\mathcal D}(a)=\Lambda_{\mathcal D}(j)
\big\},
\qquad
\mathbf M_{\mathrm{dist}}(j)
:=
[\mathbf M]_{I_{\mathrm{dist}}(j)\times I_{\mathrm{dist}}(j)}.
\]
A level-$n$ tile in the graph sequence of initial colour $j$ means the descendant cell determined by one edge of $\Xi_j^n$.
We call such a tile \emph{surviving} if its colour belongs to $I_{\mathrm{dist}}(j)$.

\begin{lemma}\label{lem:distance-monotone}
If $[\mathbf M]_{ab}>0$, then $\Lambda_{\mathcal D}(b)\le \Lambda_{\mathcal D}(a)$.
In particular, along every ancestral colour chain, the values of $\Lambda_{\mathcal D}$ are non-increasing.
\end{lemma}

\begin{proof}
If $[\mathbf M]_{ab}>0$, then $\Xi_a^n$ contains a copy of $\Xi_b^{n-1}$ for every $n\ge 1$.
Hence $\diam(\Xi_b^{n-1})\le \diam(\Xi_a^n)$.
Lemma~\ref{lemma:diameter} gives
$\Lambda_{\mathcal D}(b)^{n-1}\preceq \Lambda_{\mathcal D}(a)^n$.
Taking $n$-th roots and letting $n\to\infty$ yields
$\Lambda_{\mathcal D}(b)\le \Lambda_{\mathcal D}(a)$.
\end{proof}

\begin{lemma}\label{lem:surviving-tile-count}
For every colour $j\in[K]$, the number of level-$n$ surviving tiles in the graph sequence of initial colour $j$ is
\[
\big\|
\bbbxi_j\mathbf M_{\mathrm{dist}}(j)^n
\big\|_1.
\]
More precisely, for each $a\in I_{\mathrm{dist}}(j)$, the number of level-$n$ surviving tiles of colour $a$ is
\[
[\bbbxi_j\mathbf M_{\mathrm{dist}}(j)^n]_a.
\]
\end{lemma}

\begin{proof}
By Lemma~\ref{lem:distance-monotone}, a descendant tile is surviving if and only if every colour in its ancestral chain lies in $I_{\mathrm{dist}}(j)$.
Thus, at each substitution step, only the transitions recorded by the restricted matrix $\mathbf M_{\mathrm{dist}}(j)$ remain.
The conclusion follows by induction on $n$.
\end{proof}

\begin{definition}\label{def:GH-scaling-limit}
Let $(G^n)_{n\in\mathbb N}$ be a sequence of finite connected graphs.
For each $n$, every edge of $G^n$ is scaled by the common factor $1/\diam(G^n)$, giving the compact metric space
\[
\big(G^n,\hat\dist_{G^n}\big),
\qquad
\hat\dist_{G^n}:=\frac{\dist_{G^n}}{\diam(G^n)}.
\]
The sequence possesses a Gromov--Hausdorff scaling limit if $(G^n,\hat\dist_{G^n})$ converges in the Gromov--Hausdorff topology.
\end{definition}

\begin{theorem}\label{thm:GH-limit}
For every colour $j\in\mathfrak r_{\mathbf M}(\iota)$, the graph sequence
$(\Xi_j^n,\hat\dist_{\Xi_j^n})_{n\in\mathbb N}$ has a Gromov--Hausdorff scaling limit, denoted by
$(\Xi_j,\hat\dist_{\Xi_j})$.

Moreover, a level-$n$ tile in the graph sequence of initial colour $j$ has positive diameter in
$(\Xi_j,\hat\dist_{\Xi_j})$ if and only if it is surviving.
\end{theorem}

\begin{proof}
Fix $j\in\mathfrak r_{\mathbf M}(\iota)$.
For each $n$, define the auxiliary metric
$\tilde\dist_{\Xi_j^n}:=\Lambda_{\mathcal D}(j)^{-n}\dist_{\Xi_j^n}$.

Fix $k\ge 0$ and $x,y\in V(\Xi_j^k)$.
As in the original argument, it is enough to consider the case where $x$ and $y$ are joined in $\Xi_j^k$ by a single edge of colour $a$.
Then
\[
\tilde\dist_{\Xi_j^n}(x,y)
=
\Lambda_{\mathcal D}(j)^{-n}
\dist_{\Xi_a^{n-k}}(\mathfrak v^+,\mathfrak v^-).
\]
By Theorem~\ref{theorem:distance}, the right-hand side converges to a positive limit if
$\Lambda_{\mathcal D}(a)=\Lambda_{\mathcal D}(j)$ and to $0$ otherwise.
The case $\Lambda_{\mathcal D}(a)>\Lambda_{\mathcal D}(j)$ is excluded by Lemma~\ref{lem:distance-monotone}.
Since there are only finitely many simple paths in $\Xi_j^k$ joining $x$ and $y$, every pairwise distance on
$V(\Xi_j^k)$ converges in the auxiliary scaling.

Taking the metric quotient of $\bigcup_{m\ge 0}V(\Xi_j^m)$ by the resulting pseudo-metric and then completing it, we obtain a compact metric space, because Lemma~\ref{lemma:diameter} gives
$\diam(\Xi_j^n)\asymp \Lambda_{\mathcal D}(j)^n$ and therefore every point of $\Xi_j^n$ lies within distance
$\preceq \Lambda_{\mathcal D}(j)^{-k}$ of $V(\Xi_j^k)$.
The same estimate gives the Gromov--Hausdorff convergence in the auxiliary scaling.

Now diameter is continuous under Gromov--Hausdorff convergence, and
$\Lambda_{\mathcal D}(j)^{-n}\diam(\Xi_j^n)$ converges to a positive limit by Lemma~\ref{lemma:diameter}.
Rescaling back from the auxiliary metric to $\hat\dist_{\Xi_j^n}$ gives the Gromov--Hausdorff limit
$(\Xi_j,\hat\dist_{\Xi_j})$.

Finally, let $T$ be a level-$n$ tile in the graph sequence of initial colour $j$, and let $a$ be its colour.
By the argument above, the diameter of $T$ in the limit is positive if and only if
$\Lambda_{\mathcal D}(a)=\Lambda_{\mathcal D}(j)$.
By Lemma~\ref{lem:distance-monotone}, this is equivalent to saying that every colour in the ancestral chain of $T$
has the same distance rate as $j$, which is exactly the condition that $T$ is surviving.
\end{proof}

For each colour $j\in\mathfrak r_{\mathbf M}(\iota)$, let $C_j$ be the set of points of $\Xi_j$ that belong to only finitely many surviving tiles.

\begin{lemma}\label{lem:collapsed-countable}
For every colour $j\in\mathfrak r_{\mathbf M}(\iota)$, the set $C_j$ is countable.
\end{lemma}

\begin{proof}
Fix $x\in C_j$.
Since $x$ belongs to only finitely many surviving tiles, there is a largest level $m$ for which $x$ belongs to a surviving level-$m$ tile.
Every level-$(m+1)$ tile containing $x$ is then non-surviving, and by Theorem~\ref{thm:GH-limit} each such tile collapses to a single point.
Thus every point of $C_j$ is the image of some non-surviving tile.
There are only countably many tiles altogether, hence $C_j$ is countable.
\end{proof}

\begin{lemma}\label{lem:surviving-basis}
If $x\in \Xi_j\setminus C_j$, then for every open neighbourhood $U$ of $x$ in $(\Xi_j,\hat\dist_{\Xi_j})$,
there exists a surviving tile $T$ such that $x\in T\subset U$.
\end{lemma}

\begin{proof}
Since $x\notin C_j$, it belongs to infinitely many surviving tiles.
By Theorem~\ref{thm:GH-limit}, every surviving tile of colour $a\in I_{\mathrm{dist}}(j)$ has diameter
$\asymp \Lambda_{\mathcal D}(a)^{-n}=\Lambda_{\mathcal D}(j)^{-n}$ at level $n$.
Because the colour set is finite, the implied constants are uniform over $a\in I_{\mathrm{dist}}(j)$.
Hence the diameters of surviving tiles containing $x$ tend to $0$.
Choosing one of them small enough gives $x\in T\subset U$.
\end{proof}

\subsection{Hausdorff dimension}

We next extract the global metric dimension of the limit.
At this stage the relevant objects are the surviving tiles and the mass growth they carry.
Unlike the degree dimension in the next section, the Hausdorff dimension only records the largest asymptotic growth that remains visible in the metric space.
This is why the answer is governed by a single surviving mass rate.
The proof follows the standard upper and lower bound strategy: cover by surviving tiles, and then place disjoint balls inside a suitably chosen primitive subfamily of tiles.

Throughout this subsection, we additionally assume that $\mathbf M_{\mathrm{dist}}(j)$ is primitive-Frobenius for every colour
$j\in\mathfrak r_{\mathbf M}(\iota)$.
For each such $j$, define
\[
\Lambda_{\mathbf M}^{\mathrm{surv}}(j)
:=
\max_{k\in \mathfrak R_{\mathbf M_{\mathrm{dist}}(j)}(j)}
\rho\big(\mathbf B_k(\mathbf M_{\mathrm{dist}}(j))\big).
\]

\begin{theorem}\label{thm:singlefractal}
For every colour $j\in\mathfrak r_{\mathbf M}(\iota)$,
\[
\dimh(\Xi_j)
=
\frac{
\log \Lambda_{\mathbf M}^{\mathrm{surv}}(j)
}{
\log \Lambda_{\mathcal D}(j)
}.
\]
\end{theorem}

\begin{proof}
Fix $j\in\mathfrak r_{\mathbf M}(\iota)$ and write
\[
N_n(j):=
\big\|
\bbbxi_j\mathbf M_{\mathrm{dist}}(j)^n
\big\|_1.
\]
By Lemma~\ref{lem:surviving-tile-count}, $N_n(j)$ is the number of level-$n$ surviving tiles in the graph sequence of initial colour $j$.
Lemma~\ref{lemma:asymp-value} applied to $\mathbf M_{\mathrm{dist}}(j)$ gives
$N_n(j)\asymp n^{q_j-1}\Lambda_{\mathbf M}^{\mathrm{surv}}(j)^n$ for some integer $q_j\ge 1$.

We first prove the upper bound.
By Theorem~\ref{thm:GH-limit}, $\Xi_j\setminus C_j$ is covered by the level-$n$ surviving tiles, and
$C_j$ is countable by Lemma~\ref{lem:collapsed-countable}.
Lemma~\ref{lemma:diameter} and Theorem~\ref{thm:GH-limit} show that every level-$n$ surviving tile has diameter
$\preceq \Lambda_{\mathcal D}(j)^{-n}$.
Therefore, for every $s>0$,
\[
\mathcal H^s_{\delta_n}(\Xi_j)
\preceq
N_n(j)\Lambda_{\mathcal D}(j)^{-ns}
\preceq
n^{q_j-1}\Lambda_{\mathbf M}^{\mathrm{surv}}(j)^n\Lambda_{\mathcal D}(j)^{-ns},
\]
with $\delta_n\asymp \Lambda_{\mathcal D}(j)^{-n}$.
If $s>\log\Lambda_{\mathbf M}^{\mathrm{surv}}(j)/\log\Lambda_{\mathcal D}(j)$, the right-hand side tends to $0$.
Hence
\[
\dimh(\Xi_j)
\le
\frac{\log \Lambda_{\mathbf M}^{\mathrm{surv}}(j)}{\log \Lambda_{\mathcal D}(j)}.
\]

We now prove the lower bound.
Choose a primitive block $\mathbf B$ of $\mathbf M_{\mathrm{dist}}(j)$ such that
$\rho(\mathbf B)=\Lambda_{\mathbf M}^{\mathrm{surv}}(j)$ and $\mathbf B$ is reachable from the block containing $j$.
Since $\mathbf B$ is reachable from $j$, there exists a surviving tile $T_0\subset \Xi_j$ whose colour belongs to the index set of $\mathbf B$.
For each $n\ge 1$, let $\mathcal T_n$ be the family of descendants of $T_0$ obtained by following only colours in the block $\mathbf B$ for the next $n$ steps.
Because $\mathbf B$ is primitive,
\[
|\mathcal T_n|
\asymp
\rho(\mathbf B)^n
=
\Lambda_{\mathbf M}^{\mathrm{surv}}(j)^n.
\]

Every tile $T\in\mathcal T_n$ is connected and has two boundary points, say $p_T^+$ and $p_T^-$.
By Lemma~\ref{lemma:diameter} and Theorem~\ref{thm:GH-limit}, there exists a constant $c_0>0$, independent of $T$ and $n$, such that
\[
\hat\dist_{\Xi_j}(p_T^+,p_T^-)\ge c_0\,\Lambda_{\mathcal D}(j)^{-n}.
\]
Since $T$ is connected, there exists $x_T\in T$ with
\[
\min\big\{
\hat\dist_{\Xi_j}(x_T,p_T^+),
\hat\dist_{\Xi_j}(x_T,p_T^-)
\big\}
\ge
\frac{c_0}{2}\Lambda_{\mathcal D}(j)^{-n}.
\]
Indeed, otherwise $T$ would be contained in the disjoint union of the two open balls
$B(p_T^+,c_0\Lambda_{\mathcal D}(j)^{-n}/2)$ and
$B(p_T^-,c_0\Lambda_{\mathcal D}(j)^{-n}/2)$, contradicting connectedness.
Thus each $T\in\mathcal T_n$ contains a ball of radius
$c_0\Lambda_{\mathcal D}(j)^{-n}/4$.
These balls are pairwise disjoint, because distinct tiles of the same level meet only along their boundary images.

Hence, for every $s>0$,
\[
\mathcal H^s_{\delta_n}(\Xi_j)
\succeq
|\mathcal T_n|\,\Lambda_{\mathcal D}(j)^{-ns}
\asymp
\Lambda_{\mathbf M}^{\mathrm{surv}}(j)^n\Lambda_{\mathcal D}(j)^{-ns},
\]
with $\delta_n\asymp \Lambda_{\mathcal D}(j)^{-n}$.
If $s<\log\Lambda_{\mathbf M}^{\mathrm{surv}}(j)/\log\Lambda_{\mathcal D}(j)$, the right-hand side diverges.
Therefore
\[
\dimh(\Xi_j)
\ge
\frac{\log \Lambda_{\mathbf M}^{\mathrm{surv}}(j)}{\log \Lambda_{\mathcal D}(j)}.
\]
Combining the two bounds gives the result.
\end{proof}

\begin{lemma}\label{lem:tile-dimension}
Let $T\subset \Xi_\iota$ be a surviving tile of colour $j\in I_{\mathrm{dist}}(\iota)$.
Then
\[
\dimh(T)=\dimh(\Xi_j).
\]
\end{lemma}

\begin{proof}
Inside $T$, the surviving descendants of depth $n$ are counted by
$\|\bbbxi_j\mathbf M_{\mathrm{dist}}(j)^n\|_1$, exactly as in Lemma~\ref{lem:surviving-tile-count}.
By Theorem~\ref{thm:GH-limit}, their diameters are all $\asymp \diam(T)\Lambda_{\mathcal D}(j)^{-n}$.
Repeating the upper and lower bound arguments from the proof of Theorem~\ref{thm:singlefractal} inside $T$ therefore gives
\[
\dimh(T)
=
\frac{
\log \Lambda_{\mathbf M}^{\mathrm{surv}}(j)
}{
\log \Lambda_{\mathcal D}(j)
}
=
\dimh(\Xi_j).
\]
\end{proof}

\subsection{Coarse fractal spectrum}

The final step of the metric analysis is local rather than global.
Here the central object is the ball $B(x,r)$ inside the limit metric space.
The collapsed part contributes only countably many points, so it cannot create new Hausdorff dimensions.
What remains is a union of surviving tiles, and each surviving tile carries the Hausdorff dimension of its colour.
This is why the coarse spectrum is finite and discrete.

For a compact metric space $(X,d)$, write
\[
\mathscr S(X,d)
:=
\Big\{
\dimh\big(B_d(x,r)\big):
x\in X,\ r\in(0,1)
\Big\}.
\]
We say that $(X,d)$ is a multifractal with a finite discrete spectrum if
$1<|\mathscr S(X,d)|<\infty$.
Also set
\[
\mathscr M_{\mathrm{surv}}(\iota)
:=
\big\{
\Lambda_{\mathbf M}^{\mathrm{surv}}(j):
j\in I_{\mathrm{dist}}(\iota)
\big\}.
\]
We say that $(\Xi_\iota,\hat\dist_{\Xi_\iota})$ satisfies the balanced-distance and divergent-mass condition if
$1<|\mathscr M_{\mathrm{surv}}(\iota)|<\infty$.

\begin{lemma}\label{lem:ball-dimension}
For every ball $B(x,r)$ in $(\Xi_\iota,\hat\dist_{\Xi_\iota})$,
\[
\dimh\big(B(x,r)\big)
=
\sup\big\{
\dimh(T):
T\subset B(x,r),\ 
T \text{ is a surviving tile in } \Xi_\iota
\big\}.
\]
\end{lemma}

\begin{proof}
Let
\[
s(B(x,r))
:=
\sup\big\{
\dimh(T):
T\subset B(x,r),\ 
T \text{ is a surviving tile in } \Xi_\iota
\big\}.
\]
Since every such tile is a subset of $B(x,r)$, we clearly have $s(B(x,r))\le \dimh(B(x,r))$.

Set $E:=B(x,r)\setminus C_\iota$.
For every $y\in E$, Lemma~\ref{lem:surviving-basis} gives a surviving tile $T_y$ such that
$y\in T_y\subset B(x,r)$.
Hence
\[
B(x,r)
=
\big(B(x,r)\cap C_\iota\big)\cup \bigcup_{y\in E}T_y.
\]
Since $\Xi_\iota$ is second countable, there is a countable subcover
$E\subset \bigcup_{m=1}^\infty T_m$ with each $T_m\subset B(x,r)$ surviving.
By Lemma~\ref{lem:collapsed-countable}, the set $B(x,r)\cap C_\iota$ is countable, hence has Hausdorff dimension $0$.
Therefore
\[
\dimh\big(B(x,r)\big)
=
\dimh\Big(\bigcup_{m=1}^\infty T_m\Big)
=
\sup_{m\ge 1}\dimh(T_m)
\le
s(B(x,r)).
\]
This proves the equality.
\end{proof}

\begin{theorem}\label{thm:multifractal}
For the fixed initial colour $\iota$,
\[
\mathscr S(\Xi_\iota,\hat\dist_{\Xi_\iota})
=
\big\{
\dimh(\Xi_j):
j\in I_{\mathrm{dist}}(\iota)
\big\}
=
\left\{
\frac{
\log \Lambda_{\mathbf M}^{\mathrm{surv}}(j)
}{
\log \Lambda_{\mathcal D}(j)
}
:
j\in I_{\mathrm{dist}}(\iota)
\right\}.
\]
Consequently, $(\Xi_\iota,\hat\dist_{\Xi_\iota})$ satisfies the balanced-distance and divergent-mass condition if and only if it is a multifractal with a finite discrete spectrum, that is,
\[
1<
\big|
\mathscr S(\Xi_\iota,\hat\dist_{\Xi_\iota})
\big|
<\infty.
\]
\end{theorem}

\begin{proof}
Let
\[
S_0
:=
\big\{
\dimh(\Xi_j):
j\in I_{\mathrm{dist}}(\iota)
\big\}.
\]
The set $S_0$ is finite because the colour set is finite.

Fix a ball $B(x,r)$ in $(\Xi_\iota,\hat\dist_{\Xi_\iota})$.
By Lemma~\ref{lem:ball-dimension},
\[
\dimh\big(B(x,r)\big)
=
\sup\big\{
\dimh(T):
T\subset B(x,r),\ 
T \text{ is a surviving tile}
\big\}.
\]
Every surviving tile has colour in $I_{\mathrm{dist}}(\iota)$, and Lemma~\ref{lem:tile-dimension} shows that its Hausdorff dimension belongs to $S_0$.
Since $S_0$ is finite, the above supremum is attained.
Thus $\dimh(B(x,r))\in S_0$, so
$\mathscr S(\Xi_\iota,\hat\dist_{\Xi_\iota})\subseteq S_0$.

Conversely, fix $j\in I_{\mathrm{dist}}(\iota)$ and choose a surviving tile $T\subset \Xi_\iota$ of colour $j$.
Let $\partial T$ be its two boundary points.
Then $T\setminus \partial T$ is open in $\Xi_\iota$, and removing finitely many points does not change Hausdorff dimension, so
\[
\dimh(T\setminus \partial T)=\dimh(T)=\dimh(\Xi_j)
\]
by Lemma~\ref{lem:tile-dimension}.
Since $\Xi_\iota$ is second countable, there exist open balls $B_m\subset T\setminus \partial T$ such that
$T\setminus \partial T=\bigcup_{m=1}^\infty B_m$.
Therefore
\[
\dimh(\Xi_j)
=
\dimh(T\setminus \partial T)
=
\sup_{m\ge 1}\dimh(B_m).
\]
Each $\dimh(B_m)$ already lies in the finite set $S_0$, so the supremum is attained by some $m$.
Hence $\dimh(B_m)=\dimh(\Xi_j)$, and therefore $\dimh(\Xi_j)\in \mathscr S(\Xi_\iota,\hat\dist_{\Xi_\iota})$.
This proves $S_0\subseteq \mathscr S(\Xi_\iota,\hat\dist_{\Xi_\iota})$.

Thus
\[
\mathscr S(\Xi_\iota,\hat\dist_{\Xi_\iota})
=
\big\{
\dimh(\Xi_j):
j\in I_{\mathrm{dist}}(\iota)
\big\}.
\]
The explicit formula now follows from Theorem~\ref{thm:singlefractal}.

Finally, Theorem~\ref{thm:singlefractal} also shows that
\[
\big|
\mathscr S(\Xi_\iota,\hat\dist_{\Xi_\iota})
\big|
=
\big|
\mathscr M_{\mathrm{surv}}(\iota)
\big|,
\]
so the balanced-distance and divergent-mass condition is equivalent to
$1<|\mathscr S(\Xi_\iota,\hat\dist_{\Xi_\iota})|\leq { K}<\infty$.
\end{proof}

\begin{example}
Consider the reducible EIGS in Figure~\ref{fig:splendor}, with initial colour $\iota=1$.
The mass matrix is
\[
\mathbf M=
\begin{pmatrix}
2&1&1\\
0&4&0\\
0&0&5
\end{pmatrix}.
\]
A direct computation gives
\[
\Lambda_{\mathcal D}(1)=\Lambda_{\mathcal D}(2)=\Lambda_{\mathcal D}(3)=2.
\]
Hence
\[
I_{\mathrm{dist}}(1)=\{1,2,3\},
\qquad
\mathbf M_{\mathrm{dist}}(1)=\mathbf M.
\]
Moreover,
\[
\Lambda_{\mathbf M}^{\mathrm{surv}}(1)=5,\qquad
\Lambda_{\mathbf M}^{\mathrm{surv}}(2)=4,\qquad
\Lambda_{\mathbf M}^{\mathrm{surv}}(3)=5.
\]
Therefore
\[
\mathscr M_{\mathrm{surv}}(1)=\{4,5\},
\]
and Theorem~\ref{thm:multifractal} yields
\[
{ 
\mathscr S(\Xi_1,\hat\dist_{\Xi_1})}
=
\left\{
2,
\frac{\log 5}{\log 2}
\right\}.
\]
In particular, $(\Xi_1,\hat\dist_{\Xi_1})$ satisfies the balanced-distance and divergent-mass condition and is a multifractal with a finite discrete spectrum.
\end{example}

\section{Multiscale-freeness and degree spectrum}

In this section, we introduce the degree scaling limit, an analogue of the Gromov--Hausdorff limit under which vertices with slower degree growth lose their degree in the limit.
Within this framework, we identify a condition called BEDM, which is equivalent to the occurrence of multiscale-freeness in $\Xi_\iota$.
Unlike the Hausdorff dimension studied in the previous section, degree dimension is sensitive to the distribution of degrees and therefore captures finer information about the graph.
The key objects are no longer tiles in a metric space, but exact positive degree levels in the degree scaling limit.
For this reason, the analysis cannot stop at the largest growth rate.
Instead, we must first identify the dominant birth types, then group the exact degree levels they generate into degree classes, and finally count the vertices in each class separately.
Once this class counting is established, both scale-freeness and the degree spectrum follow in a direct way.

\subsection{Degree scaling limit}

We first determine which vertices remain visible in the degree scaling limit.
The correct starting point is the birth type of a vertex, because the local degree growth is propagated by the degree matrix $\mathbf N$.
Two pieces of information matter here: the exponential rate of growth and the polynomial correction.
Only the birth types attaining the dominant pair survive with positive degree in the limit.
The theorem below also records the exact positive degree levels contributed by such surviving birth types.

\begin{definition}\label{def:degree-limit}
Let $\Delta(G):=\max_{w \in V(G)}\deg_G(w)$.
The {\em scaling degree} of a vertex $v\in V(G)$ is $\hat{\deg}_G(v):=\deg_G(v)/\Delta(G)$.
We say that a graph sequence $(G^n)_{n\in\mathbb N}$ has a {\em degree scaling limit} $(G^\infty,\hat{\deg}_{G^\infty})$ if, for every vertex $v\in \bigcup_{n\ge 0}V(G^n)$, the limit
\[
\hat{\deg}_{G^\infty}(v)
:=
\lim_{n\to\infty}\hat{\deg}_{G^n}(v)
\]
exists.
\end{definition}

\begin{notation}\label{not:degree-notation}
Fix the initial colour $\iota\in[K]$.
Define the set of birth types by
\[
\mathcal U_\iota
:=
\{\bkappa(v):v\in V(\Xi_\iota^0)\}
\cup
\bigcup_{j\in\mathfrak r_{\mathbf M}(\iota)}
\{\bkappa(w):w\in V(R_j)\setminus\{\beta_j^+,\beta_j^-\}\}.
\]
For each non-zero $\mathbf u\in\mathcal U_\iota$, define
\[
\lambda(\mathbf u)
:=
\max_{a:[\mathbf u]_a>0}
\max_{\ell\in\mathfrak R_{\mathbf N}(a)}
\rho\big(\mathbf B_\ell(\mathbf N)\big)
\]
and
\[
\tau(\mathbf u)
:=
\max\Big\{
\varkappa_{\mathbf N}(a):
[\mathbf u]_a>0,\
\max_{\ell\in\mathfrak R_{\mathbf N}(a)}
\rho\big(\mathbf B_\ell(\mathbf N)\big)
=
\lambda(\mathbf u)
\Big\}.
\]
Also set
\[
\Lambda_{\mathcal U_\iota}
:=
\max_{\mathbf u\in\mathcal U_\iota}\lambda(\mathbf u)
\qquad\text{and}\qquad
\tau_{\deg}(\iota)
:=
\max\big\{
\tau(\mathbf u):
\mathbf u\in\mathcal U_\iota,\
\lambda(\mathbf u)=\Lambda_{\mathcal U_\iota}
\big\}.
\]
\end{notation}

\begin{lemma}\label{lem:degree-row-asymp}
For every non-zero $\mathbf u\in\mathcal U_\iota$, there exists a constant $c_{\deg}(\mathbf u)\in\mathbb R_+$ such that
\[
\|\mathbf u\mathbf N^n\|_1
\sim
c_{\deg}(\mathbf u)\,
n^{\tau(\mathbf u)-1}\lambda(\mathbf u)^n
\]
as $n\to\infty$.
\end{lemma}

\begin{proof}
Write $\mathbf u=\sum_{a:[\mathbf u]_a>0}[\mathbf u]_a\bbbxi_a$.
Then $\|\mathbf u\mathbf N^n\|_1=\mathbf u\mathbf N^n\mathbf 1$ is a scalar entry of a fixed non-negative row vector multiplied by $\mathbf N^n$ and then by a fixed positive column vector.
Replacing $\mathbf N$ by the principal submatrix indexed by the union of the reachable sets from the support of $\mathbf u$, we may assume that every diagonal block is reachable from that support.
Since $\mathbf N$ is primitive-Frobenius, the only eigenvalue of modulus $\lambda(\mathbf u)$ is the positive real number $\lambda(\mathbf u)$.
The Jordan decomposition of this principal submatrix therefore yields
\[
\|\mathbf u\mathbf N^n\|_1
=
c\,n^d\lambda(\mathbf u)^n
+
o\big(n^d\lambda(\mathbf u)^n\big)
\]
for some $d\ge 0$ and some $c\ne 0$.
Corollary~\ref{cor:row-vector-asymp} shows that the same quantity is asymptotic to
$n^{\tau(\mathbf u)-1}\lambda(\mathbf u)^n$ up to multiplicative constants.
Hence $d=\tau(\mathbf u)-1$.
Since $\|\mathbf u\mathbf N^n\|_1\ge 0$ for all $n$ and is not eventually zero, the leading coefficient must satisfy $c>0$.
This proves the result.
\end{proof}

\begin{theorem}\label{thm:degree-scaling-limit}
If $\Lambda_{\mathcal U_\iota}>1$, then the degree scaling limit $(\Xi_\iota,\hat{\deg}_{\Xi_\iota})$ exists.
Moreover, there exists a constant $C_{\deg}(\iota)\in\mathbb R_+$ such that
\[
\Delta(\Xi_\iota^n)
\sim
C_{\deg}(\iota)\,
n^{\tau_{\deg}(\iota)-1}\Lambda_{\mathcal U_\iota}^n
\]
as $n\to\infty$.
If a vertex $v$ is born in generation $m(v)$ and has birth type $\mathbf u(v)$, then
\[
\hat{\deg}_{\Xi_\iota}(v)
=
\begin{cases}
\alpha(\mathbf u(v))\,\Lambda_{\mathcal U_\iota}^{-m(v)},
&
\lambda(\mathbf u(v))=\Lambda_{\mathcal U_\iota}
\text{ and }
\tau(\mathbf u(v))=\tau_{\deg}(\iota),
\\[1ex]
0,
&
\text{otherwise},
\end{cases}
\]
where $\alpha(\mathbf u):=c_{\deg}(\mathbf u)/C_{\deg}(\iota)$.
\end{theorem}

\begin{proof}
For each birth type $\mathbf u\in\mathcal U_\iota$, let $\mu(\mathbf u)$ be the first generation in which a vertex of type $\mathbf u$ appears.
Set
\[
C_{\deg}(\iota)
:=
\max\big\{
c_{\deg}(\mathbf u)\Lambda_{\mathcal U_\iota}^{-\mu(\mathbf u)}:
\mathbf u\in\mathcal U_\iota,\
\lambda(\mathbf u)=\Lambda_{\mathcal U_\iota},\
\tau(\mathbf u)=\tau_{\deg}(\iota)
\big\}.
\]

We first prove the asymptotic formula for $\Delta(\Xi_\iota^n)$.
If $w\in V(\Xi_\iota^n)$ is born in generation $m\le n$ and has birth type $\mathbf u$, then Lemma~\ref{lemma:preivous_results}(i) gives
$\deg_{\Xi_\iota^n}(w)=\|\mathbf u\mathbf N^{n-m}\|_1$.
By Lemma~\ref{lem:degree-row-asymp}, this equals
\[
c_{\deg}(\mathbf u)\,(n-m)^{\tau(\mathbf u)-1}\lambda(\mathbf u)^{n-m}
\big(1+o(1)\big).
\]

If $\lambda(\mathbf u)<\Lambda_{\mathcal U_\iota}$, then
\[
\sup_{0\le m\le n}
\frac{\lambda(\mathbf u)^{n-m}}{\Lambda_{\mathcal U_\iota}^n}
=
\Lambda_{\mathcal U_\iota}^{-n}
\max_{0\le t\le n}\lambda(\mathbf u)^t
\to 0.
\]
Hence these vertices contribute
$o\big(n^{\tau_{\deg}(\iota)-1}\Lambda_{\mathcal U_\iota}^n\big)$ uniformly in $m\le n$.
If $\lambda(\mathbf u)=\Lambda_{\mathcal U_\iota}$ but $\tau(\mathbf u)<\tau_{\deg}(\iota)$, then
\[
\sup_{0\le m\le n}
\frac{(n-m)^{\tau(\mathbf u)-1}}{n^{\tau_{\deg}(\iota)-1}}
\le
n^{\tau(\mathbf u)-\tau_{\deg}(\iota)}
\to 0,
\]
so these vertices also contribute
$o\big(n^{\tau_{\deg}(\iota)-1}\Lambda_{\mathcal U_\iota}^n\big)$ uniformly in $m\le n$.

Now let $\mathbf u$ satisfy
$\lambda(\mathbf u)=\Lambda_{\mathcal U_\iota}$ and $\tau(\mathbf u)=\tau_{\deg}(\iota)$.
Then for each fixed generation $m$,
\[
\deg_{\Xi_\iota^n}(w)
\sim
c_{\deg}(\mathbf u)\,\Lambda_{\mathcal U_\iota}^{-m}
n^{\tau_{\deg}(\iota)-1}\Lambda_{\mathcal U_\iota}^n.
\]
Since $\Lambda_{\mathcal U_\iota}>1$, vertices born in large generations carry an extra factor
$\Lambda_{\mathcal U_\iota}^{-m}$ and therefore cannot realise the maximum degree.
Hence only finitely many birth generations matter asymptotically, and among them the largest limit constant is attained at the first occurrence generation $\mu(\mathbf u)$.
Because $\mathcal U_\iota$ is finite, we obtain
\[
\Delta(\Xi_\iota^n)
\sim
C_{\deg}(\iota)\,
n^{\tau_{\deg}(\iota)-1}\Lambda_{\mathcal U_\iota}^n.
\]

Finally, let $v$ be born in generation $m(v)$ and have birth type $\mathbf u(v)$.
Dividing the asymptotic formula for $\deg_{\Xi_\iota^n}(v)$ by the asymptotic formula for
$\Delta(\Xi_\iota^n)$ gives
\[
\frac{\deg_{\Xi_\iota^n}(v)}{\Delta(\Xi_\iota^n)}
\to
\begin{cases}
\dfrac{c_{\deg}(\mathbf u(v))}{C_{\deg}(\iota)}
\Lambda_{\mathcal U_\iota}^{-m(v)},
&
\lambda(\mathbf u(v))=\Lambda_{\mathcal U_\iota}
\text{ and }
\tau(\mathbf u(v))=\tau_{\deg}(\iota),
\\[2ex]
0,
&
\text{otherwise}.
\end{cases}
\]
This proves the existence of the degree scaling limit and the displayed formula.
\end{proof}

For each $\mathbf u\in\mathcal U_\iota$, let $\mathbf b_{\mathbf u}\in\mathbb N^{K\times 1}$ be the column vector whose $a$-th entry is the number of interior vertices of birth type $\mathbf u$ in the rule graph $R_a$.
For each integer $m\ge 1$, let $B_{\mathbf u}(m)$ be the number of vertices born in generation $m$ with birth type $\mathbf u$.
Define
\[
I_{\mathrm{deg,surv}}(\iota)
:=
\big\{
\mathbf u\in\mathcal U_\iota\setminus\{\mathbf 0\}:
\lambda(\mathbf u)=\Lambda_{\mathcal U_\iota},\
\tau(\mathbf u)=\tau_{\deg}(\iota),\
B_{\mathbf u}(m)>0 \text{ for arbitrarily large } m
\big\}.
\]
Vertices of dominant type that appear only finitely many times contribute only finitely many positive degree levels, and therefore do not affect the tail $\ell\to 0$.

\begin{lemma}\label{lem:birth-count}
For every birth type $\mathbf u\in\mathcal U_\iota$ and every $m\ge 1$,
\[
B_{\mathbf u}(m)
=
\bbbxi_\iota \mathbf M^{m-1}\mathbf b_{\mathbf u}^{\top}.
\]
\end{lemma}

\begin{proof}
At generation $m$, each edge of colour $a$ in $\Xi_\iota^{m-1}$ is replaced by a copy of the rule graph $R_a$.
That copy contributes exactly $[\mathbf b_{\mathbf u}]_a$ interior vertices of birth type $\mathbf u$.
Hence the total number of such vertices equals the number of colour-$a$ edges in $\Xi_\iota^{m-1}$, weighted by $[\mathbf b_{\mathbf u}]_a$ and summed over all colours $a$.
By Lemma~\ref{lemma:preivous_results}(ii), the row vector counting colour frequencies in $\Xi_\iota^{m-1}$ is $\bbbxi_\iota\mathbf M^{m-1}$.
Therefore
\[
B_{\mathbf u}(m)
=
\bbbxi_\iota \mathbf M^{m-1}\mathbf b_{\mathbf u}^{\top}.
\]
\end{proof}

\begin{lemma}\label{lem:birth-count-asymp}
For every $\mathbf u\in I_{\mathrm{deg,surv}}(\iota)$, there exist an integer $q(\mathbf u)\ge 1$ and a real number
$\Lambda_{\mathbf M}^{\deg}(\mathbf u)>0$ such that
\[
B_{\mathbf u}(m)
\asymp
m^{q(\mathbf u)-1}\big(\Lambda_{\mathbf M}^{\deg}(\mathbf u)\big)^m
\]
as $m\to\infty$.
\end{lemma}

\begin{proof}
Fix $\mathbf u\in I_{\mathrm{deg,surv}}(\iota)$.
By Lemma~\ref{lem:birth-count}, the sequence $B_{\mathbf u}(m)$ is the scalar sequence
$\bbbxi_\iota \mathbf M^{m-1}\mathbf b_{\mathbf u}^{\top}$.
Since $\mathbf M$ is primitive-Frobenius, the Jordan decomposition of the reachable principal submatrix of $\mathbf M$ shows that this scalar is a finite linear combination of terms of the form $m^r\mu^m$.
Hence there exist $d\ge 0$, $\Lambda>0$ and $c\ne 0$ such that
\[
B_{\mathbf u}(m)
=
c\,m^d\Lambda^m+o(m^d\Lambda^m).
\]
Because $\mathbf u\in I_{\mathrm{deg,surv}}(\iota)$, the sequence $B_{\mathbf u}(m)$ is positive for arbitrarily large $m$, so the leading coefficient must satisfy $c>0$.
Setting $q(\mathbf u):=d+1$ and $\Lambda_{\mathbf M}^{\deg}(\mathbf u):=\Lambda$ gives the result.
\end{proof}

\subsection{Scale-freeness}

We now turn from the existence of exact degree levels to their distribution.
The main point is that different dominant birth types may contribute degree values on the same geometric scale, up to a finite shift of the generation index.
This leads to the notion of a degree class.
The correct counting object is therefore not a single birth type, but the whole class of birth types producing the same asymptotic degree scale.
Once the class counting is understood, the scale-free criterion becomes very transparent.

\begin{definition}\label{def:degree-dimension}
For any degree-scaled graph $(G,\hat\deg_G)$ and any positive real number $\ell>0$, define
\[
P_{(G,\hat\deg_G)}(\ell)
:=
\big|\{v\in V(G):\hat{\deg}_G(v)=\ell\}\big|.
\]
We say that the degree scaling limit $(G^\infty,\hat\deg_{G^\infty})$ is \emph{scale-free} if, taking the limit $\ell\to 0$ through all positive values such that
$P_{(G^\infty,\hat\deg_{G^\infty})}(\ell)>0$, the quantity
\[
\dimdeg(G^\infty,\hat\deg_{G^\infty})
=
\lim_{\ell\to 0}
\frac{
\log P_{(G^\infty,\hat\deg_{G^\infty})}(\ell)
}{
-\log \ell
}
\]
exists and is positive.
In that case, this limit is called the \emph{degree dimension} of $(G^\infty,\hat\deg_{G^\infty})$.
\end{definition}

For $\mathbf u,\mathbf v\in I_{\mathrm{deg,surv}}(\iota)$, define
\[
\mathbf u\sim_{\deg}\mathbf v
\quad\Longleftrightarrow\quad
\frac{\alpha(\mathbf u)}{\alpha(\mathbf v)}
\in
\Lambda_{\mathcal U_\iota}^{\mathbb Z}.
\]
Write
\[
\mathscr K_{\deg}(\iota)
:=
I_{\mathrm{deg,surv}}(\iota)/\sim_{\deg}.
\]
For each class $K\in\mathscr K_{\deg}(\iota)$, define
\[
\Lambda_{\mathbf M}^{\deg,\mathrm{eff}}(K)
:=
\max\big\{
\Lambda_{\mathbf M}^{\deg}(\mathbf u):
\mathbf u\in K
\big\}.
\]

\begin{lemma}\label{lem:class-count}
For every class $K\in\mathscr K_{\deg}(\iota)$, there exist a constant $\alpha_K>0$ and an integer $q_K\ge 1$ such that
\[
P_{(\Xi_\iota,\hat{\deg}_{\Xi_\iota})}
\big(\alpha_K\Lambda_{\mathcal U_\iota}^{-m}\big)
\asymp
m^{q_K-1}
\big(\Lambda_{\mathbf M}^{\deg,\mathrm{eff}}(K)\big)^m
\]
as $m\to\infty$.
\end{lemma}

\begin{proof}
Fix a class $K\in\mathscr K_{\deg}(\iota)$ and choose a representative $\mathbf u_K\in K$.
Set $\alpha_K:=\alpha(\mathbf u_K)$.
For each $\mathbf u\in K$, by definition of $\sim_{\deg}$ there exists an integer $s_K(\mathbf u)$ such that
\[
\alpha(\mathbf u)
=
\alpha_K\Lambda_{\mathcal U_\iota}^{s_K(\mathbf u)}.
\]
Hence a vertex of birth type $\mathbf u$ born in generation $m+s_K(\mathbf u)$ has exact degree
\[
\alpha(\mathbf u)\Lambda_{\mathcal U_\iota}^{-(m+s_K(\mathbf u))}
=
\alpha_K\Lambda_{\mathcal U_\iota}^{-m}.
\]
Conversely, every sufficiently small positive exact degree level arises from a dominant birth type, and two different classes cannot produce the same exact degree level.
Therefore, for all sufficiently large $m$,
\[
P_{(\Xi_\iota,\hat{\deg}_{\Xi_\iota})}
\big(\alpha_K\Lambda_{\mathcal U_\iota}^{-m}\big)
=
\sum_{\mathbf u\in K}
B_{\mathbf u}\big(m+s_K(\mathbf u)\big).
\]

By Lemma~\ref{lem:birth-count-asymp}, each summand is asymptotic, up to multiplicative constants, to
\[
m^{q(\mathbf u)-1}
\big(\Lambda_{\mathbf M}^{\deg}(\mathbf u)\big)^m.
\]
Since the sum is finite, its growth is governed by the largest exponential rate, namely
$\Lambda_{\mathbf M}^{\deg,\mathrm{eff}}(K)$.
Among the terms attaining this rate, the largest polynomial exponent determines an integer $q_K\ge 1$.
Thus
\[
P_{(\Xi_\iota,\hat{\deg}_{\Xi_\iota})}
\big(\alpha_K\Lambda_{\mathcal U_\iota}^{-m}\big)
\asymp
m^{q_K-1}
\big(\Lambda_{\mathbf M}^{\deg,\mathrm{eff}}(K)\big)^m.
\]
\end{proof}

\begin{definition}\label{def:BEDM}
We say that $\Xi_\iota$ satisfies the \emph{balanced-degree and divergent-mass condition}
(\emph{BEDM} condition) if there exist
$K_1,K_2\in\mathscr K_{\deg}(\iota)$ such that
\[
\Lambda_{\mathbf M}^{\deg,\mathrm{eff}}(K_1)
\ne
\Lambda_{\mathbf M}^{\deg,\mathrm{eff}}(K_2).
\]
\end{definition}

\begin{theorem}\label{thm:singlescalefree}
Let $\mathscr I=(\Xi_\iota^0,\mathcal R,\mathcal S)$ be a reducible edge iterated graph system, and let
$(\Xi_\iota,\hat{\deg}_{\Xi_\iota})$ be its degree scaling limit.
If $\Lambda_{\mathcal U_\iota}>1$, then $\Xi_\iota$ is scale-free if and only if $\Xi_\iota$ does not satisfy the BEDM condition and
\[
\max_{K\in\mathscr K_{\deg}(\iota)}
\Lambda_{\mathbf M}^{\deg,\mathrm{eff}}(K)
>1.
\]
In that case,
\[
\dimdeg(\Xi_\iota,\hat{\deg}_{\Xi_\iota})
=
\frac{
\log \displaystyle\max_{K\in\mathscr K_{\deg}(\iota)}
\Lambda_{\mathbf M}^{\deg,\mathrm{eff}}(K)
}{
\log \Lambda_{\mathcal U_\iota}
}.
\]
\end{theorem}

\begin{proof}
Set
\[
L_*
:=
\max_{K\in\mathscr K_{\deg}(\iota)}
\Lambda_{\mathbf M}^{\deg,\mathrm{eff}}(K).
\]

Assume first that $\Xi_\iota$ does not satisfy the BEDM condition and that $L_*>1$.
Then every class $K\in\mathscr K_{\deg}(\iota)$ satisfies
$\Lambda_{\mathbf M}^{\deg,\mathrm{eff}}(K)=L_*$.
Let $\ell\to 0$ through positive exact degree levels.
All but finitely many such levels come from classes in $\mathscr K_{\deg}(\iota)$, so for sufficiently small $\ell$ there exist
$K\in\mathscr K_{\deg}(\iota)$ and an integer $m$ such that
\[
\ell=\alpha_K\Lambda_{\mathcal U_\iota}^{-m}.
\]
Since $\mathscr K_{\deg}(\iota)$ is finite,
\[
-\log \ell
=
m\log \Lambda_{\mathcal U_\iota}+O(1).
\]
By Lemma~\ref{lem:class-count},
\[
\log P_{(\Xi_\iota,\hat{\deg}_{\Xi_\iota})}(\ell)
=
m\log L_*+O(\log m).
\]
Therefore
\[
\frac{
\log P_{(\Xi_\iota,\hat{\deg}_{\Xi_\iota})}(\ell)
}{
-\log \ell
}
\to
\frac{\log L_*}{\log \Lambda_{\mathcal U_\iota}}.
\]
Since $L_*>1$, this limit is positive, so $\Xi_\iota$ is scale-free and
\[
\dimdeg(\Xi_\iota,\hat{\deg}_{\Xi_\iota})
=
\frac{\log L_*}{\log \Lambda_{\mathcal U_\iota}}.
\]

Conversely, assume that $\Xi_\iota$ is scale-free.
Then the displayed limit exists and is positive, so $L_*>1$.
If the BEDM condition held, there would exist classes $K_1,K_2\in\mathscr K_{\deg}(\iota)$ such that
\[
\Lambda_{\mathbf M}^{\deg,\mathrm{eff}}(K_1)
\ne
\Lambda_{\mathbf M}^{\deg,\mathrm{eff}}(K_2).
\]
For each $r\in\{1,2\}$, Lemma~\ref{lem:class-count} gives a sequence of exact degree levels
$\alpha_{K_r}\Lambda_{\mathcal U_\iota}^{-m}$ along which
\[
\frac{
\log P_{(\Xi_\iota,\hat{\deg}_{\Xi_\iota})}
\big(\alpha_{K_r}\Lambda_{\mathcal U_\iota}^{-m}\big)
}{
-\log\big(\alpha_{K_r}\Lambda_{\mathcal U_\iota}^{-m}\big)
}
\to
\frac{
\log \Lambda_{\mathbf M}^{\deg,\mathrm{eff}}(K_r)
}{
\log \Lambda_{\mathcal U_\iota}
}.
\]
These two limits are different, contradicting the assumption that a single degree dimension exists.
Hence the BEDM condition fails.
\end{proof}

\begin{figure}[hb]
    \centering
    \newcommand{\drawdiamond}[3]{%
  \begin{scope}[
    shift={(#1)},
    scale=#2,
    rotate=#3
  ]
    \drawarrow{0,0}{2,0.5}{red}
    \drawarrow{0,0}{2,-0.5}{red}
    \drawarrow{2,0.5}{2,-0.5}{red}
    \drawarrow{2,-0.5}{2,0.5}{blue}
  \end{scope}
}

\newcommand{\drawtwoedges}[3]{%
  \begin{scope}[
    shift={(#1)},
    scale=#2,
    rotate=#3
  ]
    \drawarrow{0,0}{2,0}{blue}
    \drawarrow{2,0}{2,0}{blue}
  \end{scope}
}

\resizebox{\textwidth}{!}{%

\begin{tikzpicture}
\begin{scope}[shift={(0, 0)}, scale=0.6]
    \node[draw=none, fill=none, rectangle] at (0,-1) {\bf\large{Rule\,:}};
    
    \drawarrow{2,0}{2,0}{red}
    
    \draw[very thick,>=stealth,->] (5, 0) --++ (1, 0);
    
    \drawarrow{7,0}{2,1}{red}
    \drawarrow{7,0}{2,-1}{red}
    \drawarrow{9,1}{2,-1}{red}
    \drawarrow{9,-1}{2,1}{blue}

    \node[draw=none, fill=none, rectangle] at (7,-0.5) {$\beta^+_1$};
    \node[draw=none, fill=none, rectangle] at (11,-0.5) {$\beta^-_1$};
    \node[draw=none, fill=none, rectangle] at (12,0) {$R_1$};

    \drawarrow{2,0}{2,0}{red}
    
    \draw[very thick,>=stealth,->] (5, 0) --++ (1, 0);

    \drawarrow{2,-2}{2,0}{blue}
    
    \draw[very thick,>=stealth,->] (5, -2) --++ (1, 0);

    \drawarrow{7,-2}{2,0}{blue}
    \drawarrow{9,-2}{2,0}{blue}

    \node[draw=none, fill=none, rectangle] at (7,-2.5) {$\beta^+_2$};
    \node[draw=none, fill=none, rectangle] at (11,-2.5) {$\beta^-_2$};
    \node[draw=none, fill=none, rectangle] at (12,-2) {$R_2$};
    
    \draw[ultra thick, dashed] (-2,-3.5) to (13,-3.5);

\end{scope}
\begin{scope}[shift={(-2, -3)}, scale=0.5]
    \drawarrow{2,0}{2,0}{red}

    \node[draw=none, fill=none, rectangle] at (3,-2) {$\Xi_1^0$};
    
    \draw[very thick,>=stealth,->] (5, 0) --++ (1, 0);
    
    \drawdiamond{7,0}{1}{0}

    \node[draw=none, fill=none, rectangle] at (9,-2) {$\Xi_1^1$};

    \draw[very thick,>=stealth,->] (12, 0) --++ (1, 0);

    \drawdiamond{14,0}{0.5}{30}
    \drawdiamond{14,0}{0.5}{-30}
    \drawdiamond{15.75,1}{0.5}{-30}
    \drawtwoedges{15.75,-1}{0.5}{30}

    \node[draw=none, fill=none, rectangle] at (15.8,-2) {$\Xi_1^2$};

    \draw[very thick,>=stealth,->] (18, 0) --++ (1, 0);

    \drawdiamond{20,0}{0.5}{70}
    \drawdiamond{20,0}{0.5}{20}
    \drawdiamond{20.7,1.85}{0.5}{20}
    \drawtwoedges{21.9,0.67}{0.49}{70}
    \drawdiamond{20,0}{0.5}{-70}
    \drawdiamond{20,0}{0.5}{-20}
    \drawtwoedges{20.7,-1.87}{0.5}{-18}
    \drawdiamond{21.9,-0.65}{0.5}{-70}
    \drawdiamond{22.6,2.55}{0.5}{-20}
    \drawdiamond{22.6,2.55}{0.5}{-70}
    \drawdiamond{24.5, 1.85}{0.5}{-70}
    \drawtwoedges{23.3, 0.65}{0.5}{-20}
    \drawtwoedges{22.65,-2.5}{0.44}{43}
     \drawtwoedges{23.9,-1.3}{0.44}{43}

    \node[draw=none, fill=none, rectangle] at (25,-2) {$\Xi_1^3$};
    
    \draw[very thick,>=stealth,->] (26, 0) --++ (1, 0);

    \node[draw=none, fill=none, rectangle] at (28,0) {\LARGE{...}};
\end{scope}
\end{tikzpicture} 
}
    \caption{The broken diamond hierarchical lattice is scale-free}
    \label{fig:BDHL_2}
\end{figure}

\begin{example}
For the broken diamond hierarchical lattice in Figure~\ref{fig:BDHL_2},
\[
\mathbf M=
\begin{pmatrix}
3 & 1\\[2pt]
0 & 2
\end{pmatrix},
\qquad
\mathbf N=
\begin{pmatrix}
2 & 0 & 0 & 0\\[2pt]
0 & 1 & 0 & 1\\[2pt]
0 & 0 & 1 & 0\\[2pt]
0 & 0 & 0 & 1
\end{pmatrix}.
\]
The matrix $\mathbf M$ has two diagonal blocks with spectral radii $3$ and $2$, while $\mathbf N$ has four diagonal blocks with spectral radii $2,1,1,1$.
The additional blue edge changes the mass growth of the blue rule, but it does not change its degree growth rate.

Since the initial colour is $\iota=1$, we have $\Lambda_{\mathcal U_\iota}=2$.
There is only one surviving degree class, and its effective mass growth rate is $3$.
Hence the BEDM condition fails.
Theorem~\ref{thm:singlescalefree} therefore gives
\[
\dimdeg(\Xi_\iota,\hat{\deg}_{\Xi_\iota})
=
\frac{\log 3}{\log 2}
\approx 1.5850.
\]

Figure~\ref{fig:scale-free-simulation} shows the log--log linear-regression fits of the degree distributions of { 
$\Xi_1^{7}$ and $\Xi_1^{11}$} obtained with Python.
Each plot exhibits a pronounced two-branch structure:
a red branch, which corresponds to the surviving degree layer,
and a blue branch, which corresponds to a suppressed degree class.
As $n\to\infty$, the blue branch is progressively compressed and eventually disappears,
while the slopes ($-1.75$ and $-1.63$) of the surviving red branch approach the theoretical value $-\log 3/\log 2$.

\begin{figure}[ht]
  \centering

  \begin{minipage}[b]{0.49\textwidth}
    \centering
    \includegraphics[width=\linewidth]{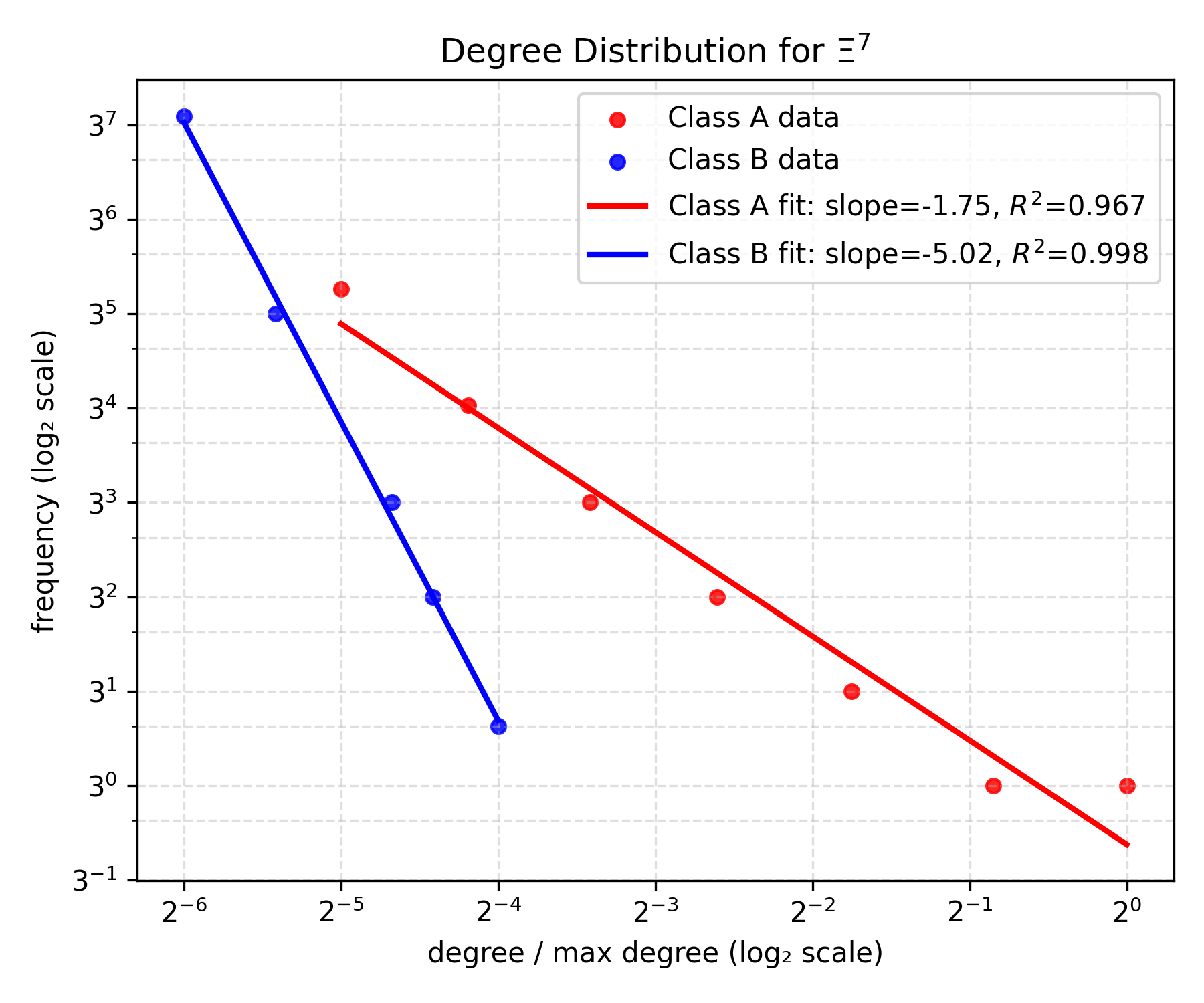}
  \end{minipage}
  \hfill
  \begin{minipage}[b]{0.49\textwidth}
    \centering
    \includegraphics[width=\linewidth]{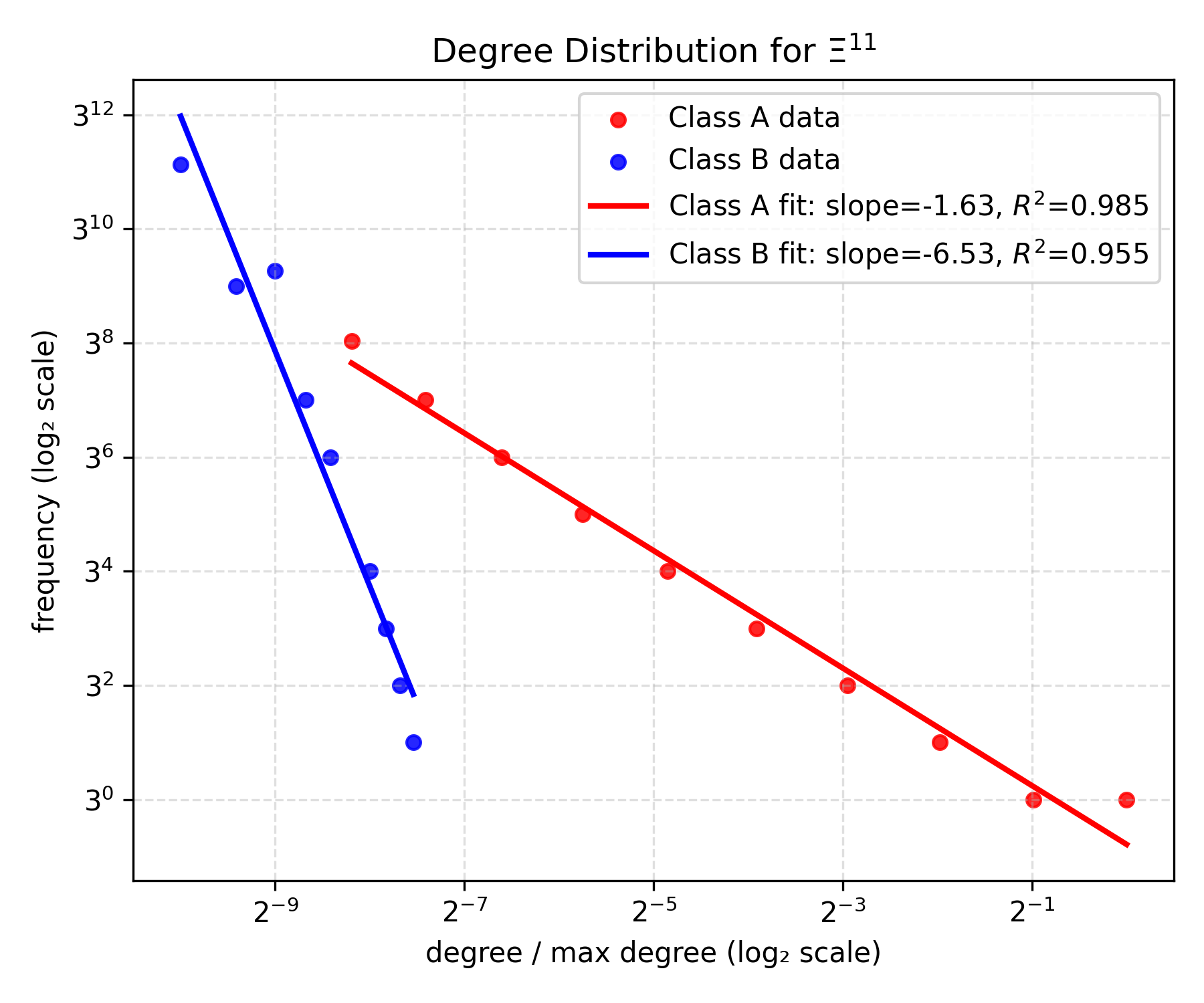}
  \end{minipage}

  \caption{Numerics of the scale-freeness of Broken DHL $\Xi_1^{7}$ and $\Xi_1^{11}$.}
  \label{fig:scale-free-simulation}
\end{figure}
\end{example}

\subsection{Degree spectrum}

We finally record the full tail structure of the exact degree distribution.
Where the previous subsection asked whether all classes share one common exponent, this subsection keeps every class separate.
The resulting degree spectrum is therefore the precise degree-side analogue of the coarse fractal spectrum from the previous section.
Its discreteness comes from the fact that only finitely many degree classes survive.
The only remaining task is to identify the limit point contributed by each class.

\begin{definition}\label{def:degree-spectrum}
Let $(G^\infty,\hat\deg_{G^\infty})$ be a degree-scaling limit and define the
\emph{degree spectrum} by
\[
\mathscr D(G^\infty,\hat\deg_{G^\infty})
:=
\omega\text{-}\lim_{\ell\to 0}
\frac{
\log P_{(G^\infty,\hat\deg_{G^\infty})}(\ell)
}{
-\log \ell
},
\]
where $\omega\text{-}\lim$ denotes the set of all limit points along sequences
$(\ell_m)$ with $\ell_m\downarrow 0$ and
$P_{(G^\infty,\hat\deg_{G^\infty})}(\ell_m)>0$ for all $m$.
We say that $(G^\infty,\hat\deg_{G^\infty})$ is \emph{finite discrete multiscale-free} if
\[
1<\big|\mathscr D(G^\infty,\hat\deg_{G^\infty})\big|<\infty.
\]
\end{definition}

\begin{theorem}\label{thm:multiscalefree}
Let $\mathscr I=(\Xi_\iota^{0},\mathcal R,\mathcal S)$ be a reducible edge iterated graph system with degree scaling limit
$(\Xi_\iota,\hat{\deg}_{\Xi_\iota})$.
If $\Lambda_{\mathcal U_\iota}>1$, then
\[
\mathscr D(\Xi_\iota,\hat{\deg}_{\Xi_\iota})
=
\bigg\{
\frac{
\log \Lambda_{\mathbf M}^{\deg,\mathrm{eff}}(K)
}{
\log \Lambda_{\mathcal U_\iota}
}
:
K\in\mathscr K_{\deg}(\iota)
\bigg\}.
\]
Consequently, $\Xi_\iota$ satisfies the BEDM condition if and only if it is finite discrete multiscale-free.
\end{theorem}

\begin{proof}
Let $K\in\mathscr K_{\deg}(\iota)$.
By Lemma~\ref{lem:class-count}, the exact degree sequence
$\alpha_K\Lambda_{\mathcal U_\iota}^{-m}$ contributes the limit point
\[
\frac{
\log \Lambda_{\mathbf M}^{\deg,\mathrm{eff}}(K)
}{
\log \Lambda_{\mathcal U_\iota}
}
\]
to $\mathscr D(\Xi_\iota,\hat{\deg}_{\Xi_\iota})$.
Hence the right-hand side is contained in the degree spectrum.

Conversely, let $s\in \mathscr D(\Xi_\iota,\hat{\deg}_{\Xi_\iota})$.
By definition, there exists a sequence $\ell_n\downarrow 0$ such that
$P_{(\Xi_\iota,\hat{\deg}_{\Xi_\iota})}(\ell_n)>0$ and
\[
\frac{
\log P_{(\Xi_\iota,\hat{\deg}_{\Xi_\iota})}(\ell_n)
}{
-\log \ell_n
}
\to s.
\]
All sufficiently small positive exact degree levels belong to some class in $\mathscr K_{\deg}(\iota)$.
Because there are only finitely many classes, we may pass to a subsequence and assume that
\[
\ell_n=\alpha_K\Lambda_{\mathcal U_\iota}^{-m_n}
\]
for one fixed class $K$.
Lemma~\ref{lem:class-count} then gives
\[
\log P_{(\Xi_\iota,\hat{\deg}_{\Xi_\iota})}(\ell_n)
=
m_n\log \Lambda_{\mathbf M}^{\deg,\mathrm{eff}}(K)+O(\log m_n)
\]
and
\[
-\log \ell_n
=
m_n\log \Lambda_{\mathcal U_\iota}+O(1).
\]
Therefore
\[
s
=
\frac{
\log \Lambda_{\mathbf M}^{\deg,\mathrm{eff}}(K)
}{
\log \Lambda_{\mathcal U_\iota}
}.
\]
This proves the reverse inclusion.

Hence
\[
\mathscr D(\Xi_\iota,\hat{\deg}_{\Xi_\iota})
=
\bigg\{
\frac{
\log \Lambda_{\mathbf M}^{\deg,\mathrm{eff}}(K)
}{
\log \Lambda_{\mathcal U_\iota}
}
:
K\in\mathscr K_{\deg}(\iota)
\bigg\}.
\]
Since $\mathscr K_{\deg}(\iota)$ is finite, the degree spectrum is finite.
It is non-degenerate if and only if there exist two classes
$K_1,K_2\in\mathscr K_{\deg}(\iota)$ such that
$\Lambda_{\mathbf M}^{\deg,\mathrm{eff}}(K_1)\ne \Lambda_{\mathbf M}^{\deg,\mathrm{eff}}(K_2)$, which is exactly the BEDM condition.
\end{proof}

\begin{example}
We revisit the splendor diamond hierarchical lattice introduced in the opening section;
see Figure~\ref{fig:splendor}.
Its substitution matrices for mass and degree are
\[
\mathbf M=
\begin{pmatrix}
2 & 1 & 1\\
0 & 4 & 0\\
0 & 0 & 5
\end{pmatrix},
\qquad
\mathbf N=
\begin{pmatrix}
2 & 0 & 0 & 0 & 0 & 0\\
0 & 0 & 0 & 1 & 0 & 1\\
0 & 0 & 2 & 0 & 0 & 0\\
0 & 0 & 0 & 2 & 0 & 0\\
0 & 0 & 0 & 0 & 2 & 0\\
0 & 0 & 0 & 0 & 0 & 2
\end{pmatrix}.
\]
Here $\Lambda_{\mathcal U_\iota}=2$.
The dominant degree layer splits into two classes, represented by the interior birth types of degrees $2$ and $3$.
Their effective mass growth rates are $4$ and $5$, respectively.
Therefore this EIGS satisfies the BEDM condition.
Theorem~\ref{thm:multiscalefree} yields
\[
\mathscr D(\Xi_\iota,\hat{\deg}_{\Xi_\iota})
=
\bigg\{
\frac{\log 4}{\log 2}=2,\,
\frac{\log 5}{\log 2}\approx 2.3219
\bigg\}.
\]
The same two mass rates also govern the surviving fractal branches, so in this example
\[
\mathscr D(\Xi_\iota,\hat{\deg}_{\Xi_\iota})
=
\mathscr S \ximetric.
\]

A useful rule of observation is that, when several straight segments appear in the degree distribution plot,
their point of intersection can hint at the underlying regime.
If the segments meet at small degree values, such as in Figure~\ref{fig:scale-free-simulation},
then the extra branches are likely to vanish in the degree scaling limit, so the graph is merely scale-free.
By contrast, if they intersect at large degree values, as in Figure~\ref{fig:multi-scale},
then the tail retains more than one branch as $\ell\to 0$, producing several $\omega$-limits;
such a pattern is typical of a multiscale-free graph.

Figure~\ref{fig:multi-scale} shows how our simulations for { $\Xi_1^7$ and $\Xi_1^{11}$}
confirm that the system indeed exhibits a multiscale-free structure with a discrete degree spectrum.
In particular, the double-log regression for $\Xi^{11}$, with slopes being $1.95$ and $2.34$, yields two empirical branches close to the theoretical values
$2$ and $2.3219$.
In fact, once we restrict attention to the data points with $\ell\to 0$,
the theoretical prediction and the simulation agree almost perfectly.

\begin{figure}[htb]
  \centering

  \begin{minipage}[b]{0.49\textwidth}
    \centering
    \includegraphics[width=\linewidth]{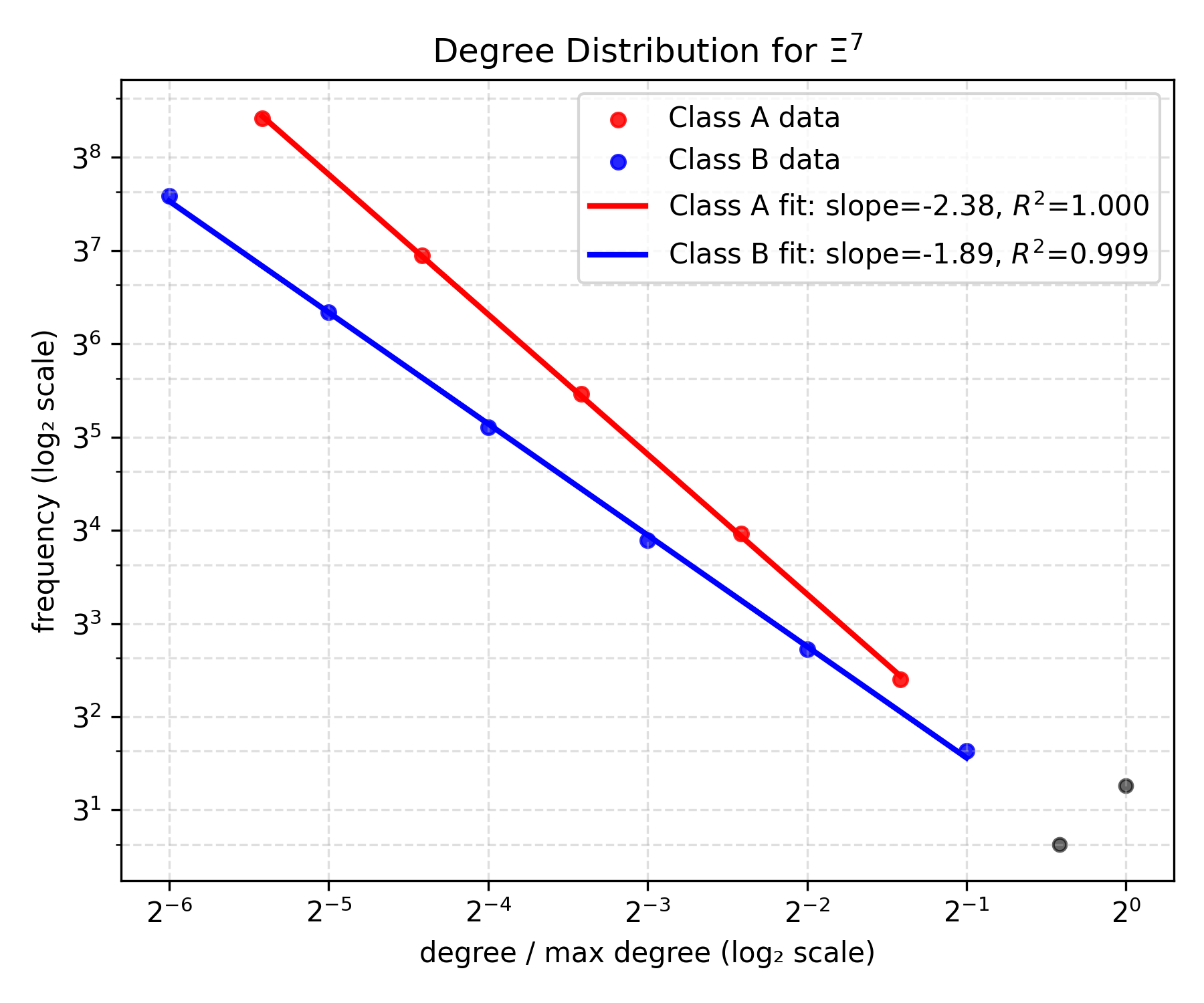}
  \end{minipage}
  \hfill
  \begin{minipage}[b]{0.49\textwidth}
    \centering
    \includegraphics[width=\linewidth]{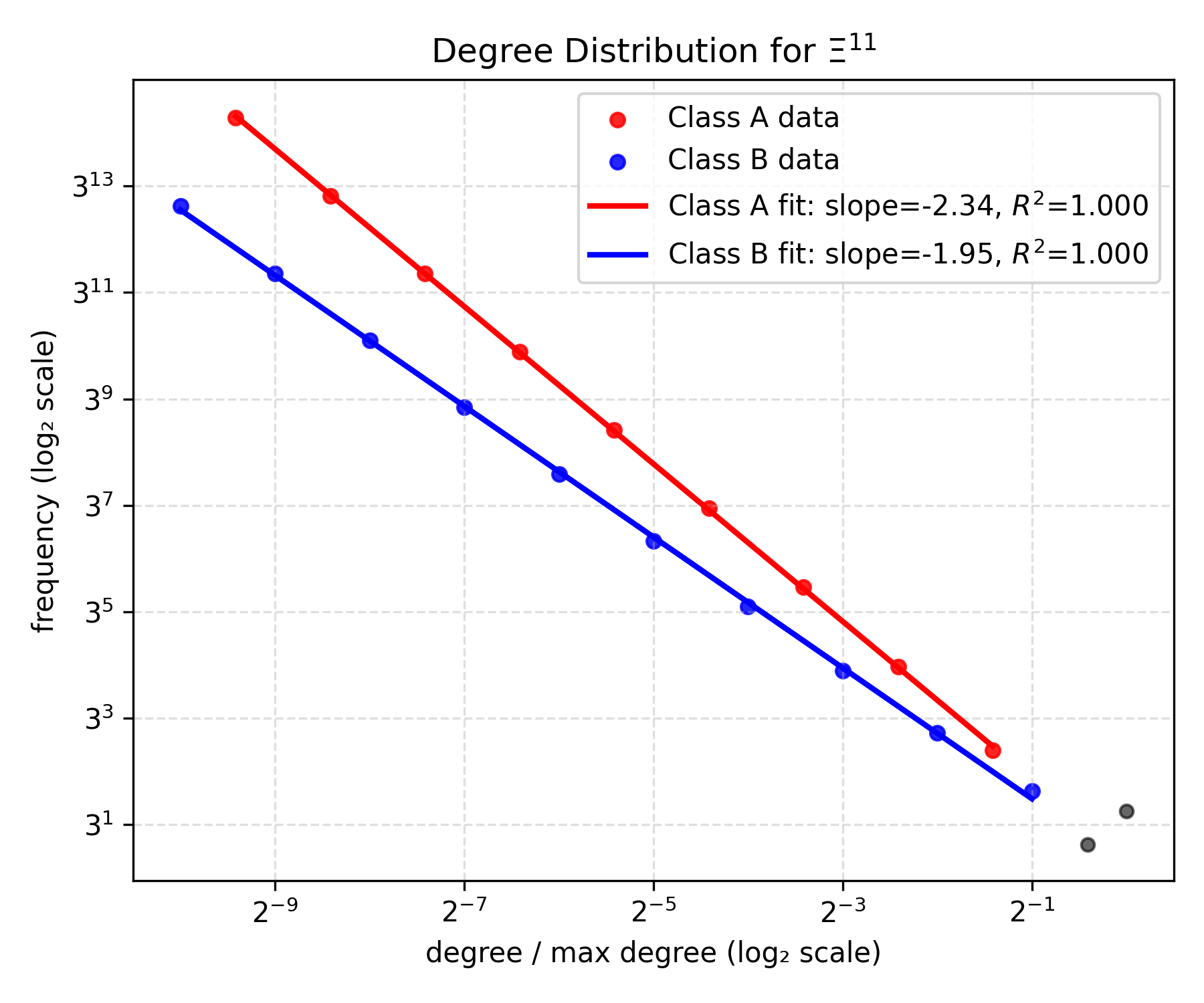}
  \end{minipage}

  \caption{Numerics of the multiscale-freeness of Splendor DHL $\Xi_1^{7}$ and $\Xi_1^{11}$.}
  \label{fig:multi-scale}
\end{figure}
\end{example}

\section*{Acknowledgments}
The authors would like to thank Koo Feng Goh, Xuanyu Yu and Xiaole Zou for their comments.
The work was supported by the Additional Funding Programme for Mathematical Sciences, 
delivered by EPSRC (EP/V521917/1) and the Heilbronn Institute for Mathematical Research to [N.Z.L.]. 
This work was also supported by the EPSRC Centre for Doctoral Training in Mathematics of Random Systems: 
Analysis, Modelling and Simulation (EP/S023925/1) to [N.Z.L.].



\section*{References}
\bibliography{References}

@article{watts1998collective,
  title={Collective dynamics of ‘small-world’networks},
  author={Watts, Duncan J and Strogatz, Steven H},
  journal={Nature},
  volume={393},
  number={6684},
  pages={440--442},
  year={1998},
  publisher={Nature Publishing Group}
}

@article{barabasi1999emergence,
  title={Emergence of scaling in random networks},
  author={Barab{\'a}si, Albert-L{\'a}szl{\'o} and Albert, R{\'e}ka},
  journal={Science},
  volume={286},
  number={5439},
  pages={509--512},
  year={1999},
  publisher={American Association for the Advancement of Science}
}

@article{lustig2016perron,
  title={Perron-Frobenius theory and frequency convergence for reducible substitutions},
  author={Lustig, Martin and Uyanik, Caglar},
  journal={Discrete and Continuous Dynamical Systems},
  volume={37},
  number={1},
  pages={355--385},
  year={2016},
  publisher={Discrete and Continuous Dynamical Systems}
}

@article{li2024scale,
  title={On the scale-freeness of random colored substitution networks},
  author={Li, Nero Ziyu and Britz, Thomas},
  journal={Proceedings of the American Mathematical Society},
  volume={152},
  number={04},
  pages={1377--1389},
  year={2024}
}

@article{neroli2024fractal,
  title={Fractal dimensions for iterated graph systems},
  author={Neroli, Ziyu},
  journal={Proceedings of the Royal Society A},
  volume={480},
  number={2300},
  pages={20240406},
  year={2024},
  publisher={The Royal Society}
}

@article{song2005self,
  title={Self-similarity of complex networks},
  author={Song, Chaoming and Havlin, Shlomo and Makse, Hernan A},
  journal={Nature},
  volume={433},
  number={7024},
  pages={392--395},
  year={2005},
  publisher={Nature Publishing Group UK London}
}

@article{xi2017fractality,
  title={Fractality and scale-free effect of a class of self-similar networks},
  author={Xi, Lifeng and Wang, Lihong and Wang, Songjing and Yu, Zhouyu and Wang, Qin},
  journal={Physica A: Statistical Mechanics and its Applications},
  volume={478},
  pages={31--40},
  year={2017},
  publisher={Elsevier}
}

@article{anttila2024iterated,
  title={Iterated graph systems and the combinatorial Loewner property},
  author={Anttila, Riku and Eriksson-Bique, Sylvester},
  journal={arXiv preprint arXiv:2408.15692},
  year={2024}
}

@article{anttila2024constructions,
  title={On Constructions of Fractal Spaces Using Replacement and the Combinatorial Loewner Property},
  author={Anttila, Riku and Eriksson-Bique, Sylvester},
  journal={arXiv preprint arXiv:2406.08062},
  year={2024}
}

@article{li2018scale,
  title={Scale-free effect of substitution networks},
  author={Li, Ziyu and Yu, Zhouyu and Xi, Lifeng},
  journal={Physica A: Statistical Mechanics and its Applications},
  volume={492},
  pages={1449--1455},
  year={2018},
  publisher={Elsevier}
}

@article{ye2019average,
  title={Average distance of substitution networks},
  author={Ye, Qianqian and Xi, Lifeng},
  journal={Fractals},
  volume={27},
  number={06},
  pages={1950097},
  year={2019},
  publisher={World Scientific}
}

@article{hambly2010diffusion,
  title={Diffusion on the scaling limit of the critical percolation cluster in the diamond hierarchical lattice},
  author={Hambly, Ben M and Kumagai, Takashi},
  journal={Communications in Mathematical Physics},
  volume={295},
  pages={29--69},
  year={2010},
  publisher={Springer}
}

@article{levitan2022renormalization,
  title={Renormalization group approach to percolation in hierarchical lattices},
  author={Levitan, Abraham},
  journal={arXiv preprint arXiv:2202.09436},
  year={2022}
}

@article{cruz2023percolation,
  title={Percolation on fractal networks: A survey},
  author={Cruz, Miguel-{\'A}ngel Mart{\'\i}nez and Ortiz, Juli{\'a}n Pati{\~n}o and Ortiz, Miguel Pati{\~n}o and Balankin, Alexander},
  journal={Fractal and Fractional},
  volume={7},
  number={3},
  pages={231},
  year={2023},
  publisher={MDPI}
}

@article{barlow1988brownian,
  title={Brownian motion on the Sierpinski gasket},
  author={Barlow, Martin T and Perkins, Edwin A},
  journal={Probability theory and related fields},
  volume={79},
  number={4},
  pages={543--623},
  year={1988},
  publisher={Springer}
}

@article{barlow1999brownian,
  title={Brownian motion and harmonic analysis on Sierpinski carpets},
  author={Barlow, Martin T and Bass, Richard F},
  journal={Canadian Journal of Mathematics},
  volume={51},
  number={4},
  pages={673--744},
  year={1999},
  publisher={Cambridge University Press}
}

@article{kigami1989harmonic,
  title={A harmonic calculus on the Sierpinski spaces},
  author={Kigami, Jun},
  journal={Japan Journal of applied mathematics},
  volume={6},
  pages={259--290},
  year={1989},
  publisher={Springer}
}

@article{kigami1993harmonic,
  title={Harmonic calculus on pcf self-similar sets},
  author={Kigami, Jun},
  journal={Transactions of the American Mathematical Society},
  volume={335},
  number={2},
  pages={721--755},
  year={1993}
}

@book{kigami2001analysis,
  title={Analysis on Fractals},
  author={Kigami,  Jun},
  year={2001},
  publisher={Cambridge University Press}
}

@article{hambly2011asymptotics,
  title={Asymptotics for functions associated with heat flow on the Sierpinski carpet},
  author={Hambly, Ben M},
  journal={Canadian Journal of Mathematics},
  volume={63},
  number={1},
  pages={153--180},
  year={2011},
  publisher={Cambridge University Press}
}

@inproceedings{barlow1997transition,
  title={Transition density estimates for Brownian motion on scale irregular Sierpinski gaskets},
  author={Barlow, Martin Thomas and Hambly, Ben M},
  booktitle={Annales de l'Institut Henri Poincare (B) Probability and Statistics},
  volume={33},
  pages={531--557},
  year={1997},
  organization={Elsevier}
}

@article{grigor2003heat,
  title={Heat kernels on metric measure spaces and an application to semilinear elliptic equations},
  author={Grigor’yan, Alexander and Hu, Jiaxin and Lau, Ka-Sing},
  journal={Transactions of the American Mathematical Society},
  volume={355},
  number={5},
  pages={2065--2095},
  year={2003}
}

@inproceedings{grigor2008dichotomy,
  title={On the dichotomy in the heat kernel two sided estimates},
  author={Grigor’yan, Alexander and Kumagai, Takashi},
  booktitle={Analysis on Graphs and its Applications (P. Exner et al.(eds.)), Proc. of Symposia in Pure Math},
  volume={77},
  pages={199--210},
  year={2008}
}

@incollection{migdal1996recursion,
  title={Recursion equations in gauge field theories},
  author={Migdal, Alexander A},
  booktitle={30 Years Of The Landau Institute—Selected Papers},
  pages={114--119},
  year={1996},
  publisher={World Scientific}
}

@article{kadanoff1976notes,
  title={Notes on Migdal's recursion formulas},
  author={Kadanoff, Leo P},
  journal={Annals of Physics},
  volume={100},
  number={1-2},
  pages={359--394},
  year={1976},
  publisher={Elsevier}
}

@article{stanley1977cluster,
  title={Cluster shapes at the percolation threshold: an effective cluster dimensionality and its connection with critical-point exponents},
  author={Stanley, H Eugene},
  journal={Journal of Physics A: Mathematical and General},
  volume={10},
  number={11},
  pages={L211},
  year={1977},
  publisher={IOP Publishing}
}

@article{berker1979renormalisation,
  title={Renormalisation-group calculations of finite systems: order parameter and specific heat for epitaxial ordering},
  author={Berker, Ahmet Nihat and Ostlund, Sandor},
  journal={Journal of Physics C: Solid State Physics},
  volume={12},
  number={22},
  pages={4961},
  year={1979},
  publisher={IOP Publishing}
}

@article{kaufman1981exactly,
  title={Exactly soluble Ising models on hierarchical lattices},
  author={Kaufman, Miron and Griffiths, Robert B},
  journal={Physical Review B},
  volume={24},
  number={1},
  pages={496},
  year={1981},
  publisher={APS}
}

@article{alexander1982density,
  title={Density of states on fractals: fractons},
  author={Alexander, Shlomo and Orbach, Raymond},
  journal={Journal de Physique Lettres},
  volume={43},
  number={17},
  pages={625--631},
  year={1982},
  publisher={Les Editions de Physique}
}

@article{toussaint1983particle,
  title={Particle-antiparticle annihilation in diffusive motion},
  author={Toussaint, Doug and Wilczek, Frank},
  journal={The Journal of Chemical Physics},
  volume={78},
  number={5},
  pages={2642--2647},
  year={1983},
  publisher={American Institute of Physics Publising LLC}
}

@article{derrida1983fractal,
  title={Fractal structure of zeros in hierarchical models},
  author={Derrida, Bernard and De Seze, L and Itzykson, Claude},
  journal={Journal of Statistical Physics},
  volume={33},
  pages={559--569},
  year={1983},
  publisher={Springer}
}

@article{gefen1984phase,
  title={Phase transitions on fractals. II. Sierpinski gaskets},
  author={Gefen, Yuval and Aharony, Amnon and Shapir, Yonathan and Mandelbrot, Benoit B},
  journal={Journal of Physics A: Mathematical and General},
  volume={17},
  number={2},
  pages={435},
  year={1984},
  publisher={IOP Publishing}
}

@article{stanley1984flow,
  title={Flow in porous media: The" backbone" fractal at the percolation threshold},
  author={Stanley, H Eugene and Coniglio, Antonio},
  journal={Physical Review B},
  volume={29},
  number={1},
  pages={522},
  year={1984},
  publisher={APS}
}

@article{ravasz2003hierarchical,
  title={Hierarchical organization in complex networks},
  author={Ravasz, Erzs{\'e}bet and Barab{\'a}si, Albert-L{\'a}szl{\'o}},
  journal={Physical review E},
  volume={67},
  number={2},
  pages={026112},
  year={2003},
  publisher={APS}
}

@article{pastor2001epidemic,
  title={Epidemic spreading in scale-free networks},
  author={Pastor-Satorras, Romualdo and Vespignani, Alessandro},
  journal={Physical review letters},
  volume={86},
  number={14},
  pages={3200},
  year={2001},
  publisher={APS}
}

@article{li2025fractal,
  title={Fractal Graph Contrastive Learning},
  author={Li, Nero Z and Zhai, Xuehao and Shi, Zhichao and Shi, Boshen and Jiang, Xuhui},
  journal={arXiv preprint arXiv:2505.11356},
  year={2025}
}

@article{devyver2023gradient,
  title={Gradient estimate for the heat kernel on some fractal-like cable systems and quasi-Riesz transforms},
  author={Devyver, Baptiste and Russ, Emmanuel and Yang, Meng},
  journal={International Mathematics Research Notices},
  volume={2023},
  number={18},
  pages={15537--15583},
  year={2023},
  publisher={Oxford University Press}
}

@article{li2021self,
  title={Self-similar k-graph C*-algebras},
  author={Li, Hui and Yang, Dilian},
  journal={International Mathematics Research Notices},
  volume={2021},
  number={15},
  pages={11270--11305},
  year={2021},
  publisher={Oxford University Press}
}

\end{document}